\documentclass[11pt, a4paper, reqno]{amsart}
%%% Packages that are used in this file %%%%
\usepackage{amsmath,amssymb,amsthm,soul}
\usepackage{xcolor,textcomp}
\usepackage{graphicx, epstopdf}
\usepackage{braket,amsfonts}
\usepackage[T1]{fontenc}
\usepackage[utf8]{inputenc}
\usepackage{mathtools}
\usepackage{mwe}
\usepackage{cancel}%To strikethrough text
\usepackage[top=2.9cm,bottom=2.9cm,left=3cm,right=3cm]{geometry}
\usepackage{hyperref}
\hypersetup{colorlinks=true,linkcolor=red,citecolor=red,filecolor=magenta,urlcolor=blue}
\usepackage[capitalise]{cleveref}
\usepackage{comment}
\usepackage{tikz}
\usepackage{pgfplots}
\usepackage{stmaryrd}
%\usepackage[mathscr]{euscript}
%\addtocontents{toc}{\protect\setcounter{tocdepth}{1}}
%\usepackage[notcite,notref]{showkeys}
\usepackage[sort,nocompress]{cite}
\numberwithin{equation}{section}
%\usepackage[pagewise]{lineno}\linenumbers

%\allowdisplaybreaks

%%%% Commands that are used in this file %%%%

%\renewcommand{\bf}[1]{\mathbf{#1}}

%\DeclareMathOperator{\Span}{span}
\newcommand{\spann}{\operatorname{span}}

\newcommand{\meas}{\operatorname{meas}}
\newcommand{\rank}{\operatorname{rank}}
\newcommand{\curl}{\operatorname{curl}}
\newcommand{\rico}{\operatorname{rico}}
\newcommand{\grad}{\operatorname{grad}}
\newcommand{\co}{\operatorname{co}}
\newcommand{\intt}{\operatorname{int}}
\newcommand{\aff}{\operatorname{aff}}
\newcommand{\rbd}{\operatorname{rbd}}
\newcommand{\ri}{\operatorname{ri}}

%%%% Environments that are used in this file %%%%
\newtheorem{theorem}{Theorem}[section]
\newtheorem{lemma}[theorem]{Lemma}
\newtheorem{remark}[theorem]{Remark}

%%%% Frontmatter %%%%

\title[Differential Inclusions involving the curl operator]{Differential Inclusions involving the curl operator}
\author[N. Nesha]{ Nurun Nesha$^{\dagger,1}$ }
\thanks{$^\dagger$Indian Institute of Science Education and Research Kolkata, Campus Road, Mohanpur, West Bengal 741246, India; $^1$nn16ip021@gmail.com}

\pgfplotsset{compat=1.18} 
%%%% Main body %%%%
\begin{document}

\begin{abstract}
In this article, we study the existence of $\eta\in W_0^{1,\infty}(\Omega;\mathbb R^n)$ satisfying $$\curl \eta\in E \textrm{ a.e. in }\Omega,$$
where $n\in \mathbb N, \Omega\subseteq \mathbb R^n$ is open, bounded and $E\subseteq \Lambda^2.$
\end{abstract}
	
\keywords{}
	
\subjclass[2010]{}

%\leavevmode\thispagestyle{empty}\newpage
 
\maketitle
	
\tableofcontents
	
\section{Introduction and Main Results}
In this paper, we study the following differential inclusion problem 
\begin{equation}
\begin{aligned}
&\curl \eta \in E \ \ \textrm{ a.e. in } \Omega\\
&\textrm{ and } \int\limits_\Omega \eta \neq 0
\end{aligned}
\end{equation}\label{introeq1}
where $\Omega\subseteq \mathbb R^n$ is open, bounded,  $E\subseteq \Lambda^2(\mathbb R^n)$, and $n\geq 4$. This problem has been studied in Bandyopadhyay-Barroso-Dacorogna-Matias \cite{bandyopadhyay2007differential} and Bandyopadhyay-Dacorogna-Kneuss \cite{bandyopadhyay2015some} in the lower dimensional cases, namely when $\dim \spann E=n-1$ when $n\geq 3,$ $\dim \spann E=3,$ when $n=3.$ In this article, we investigate the case when $\dim \spann E\geq n.$ 
The most fundamental case is, of course, the gradient case which has received notable attention, in particular, by Bressan-Flores \cite{BressanFlores}, Cellina \cite{cellina1993minima,cellina1993minimanecessary}, Dacorogna-Marcellini \cite{DacorognaMarcellini} and Friesecke \cite{Friesecke}.  An extensive study has been done in \cite{dacorogna2005nonconvex} on this topic.
We prove a few  existence as well as non-existence results in this regard. Our main result is the following which we will prove in section $4.$
\begin{theorem}
    Let $n\in \mathbb N, n\geq 5$ and $E\subseteq \Lambda^2(\mathbb R^n)$ be such that $$\omega \wedge \omega'=0 \textrm{ for all }\ \omega ,\omega'\in E.$$
    Let $\Omega \subseteq \mathbb R^n$ be  an open, bounded set.  Then there exists $\eta \in W_0^{1,\infty}(\Omega;\mathbb R^n)$ such that
    \begin{align} \label{16/12/2022 1`}
     & \curl \eta \in E \textrm{ a.e. in } \Omega  \nonumber \\
      & \meas\{x\in \Omega : \curl \eta (x)=e\}>0 \textrm{ for all } e\in E\\
      & \textrm{ and } \int\limits_{\Omega}\eta \neq 0 \nonumber
     \end{align}
     if and only if $0\in \rico E$ and $\dim \spann E=n-1.$
\end{theorem}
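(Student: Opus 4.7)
The plan is to prove the two implications separately; the forward (existence) direction will reduce to earlier material on the $\dim\spann E=n-1$ case, while the reverse (necessity) direction requires a combination of boundary integration, an algebraic bound on totally isotropic subspaces of $\Lambda^2(\mathbb R^n)$, and a Poincar\'e-lemma style argument to pin down the exact value $n-1$.

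For the ``$\Leftarrow$'' direction, I would apply the existence theory developed in this paper (or its antecedent in \cite{bandyopadhyay2015some}) for the case $\dim\spann E=n-1$. The hypothesis $0\in\rico E$ is the standard compatibility condition needed to feed into a Baire-category construction producing $\eta\in W_0^{1,\infty}(\Omega;\mathbb R^n)$ with $\curl\eta\in E$ a.e. and with each value $e\in E$ attained on a set of positive measure (genericity). The pairwise wedge condition $\omega\wedge\omega'=0$ is what keeps the differential constraint $\curl\eta\in E$ solvable at the level of the relaxation. The extra condition $\int_\Omega\eta\neq 0$ comes from the fact that, since $\dim\spann E=n-1$, there is a one-dimensional space of 1-forms whose curl sits inside $\spann E$ modulo $E$, giving a free direction along which the construction can be tilted so that the vector average does not vanish.

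For the ``$\Rightarrow$'' direction I would proceed in three steps. Step~1: integration by parts gives $\int_\Omega\curl\eta=0$, since $\eta\in W_0^{1,\infty}(\Omega;\mathbb R^n)$ implies every entry $\int_\Omega(\partial_i\eta^j-\partial_j\eta^i)\,dx$ vanishes by the divergence theorem; combined with $\curl\eta\in E$ a.e.\ and $\meas\{\curl\eta=e\}>0$ for every $e\in E$, the pushforward measure $\mu=(\curl\eta)_{\#}(|\Omega|^{-1}\mathcal L^n\llcorner\Omega)$ assigns strictly positive mass to each $e\in E$ (so $E$ is at most countable) and has barycenter $0$; therefore $0\in\rico E$. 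Step~2 ($\dim\spann E\leq n-1$): by bilinearity the wedge condition extends from $E$ to $V:=\spann E$, so $\omega\wedge\omega'=0$ for all $\omega,\omega'\in V$. The classification of totally wedge-isotropic subspaces of $\Lambda^2(\mathbb R^n)$ says that either $V\subseteq v\wedge\mathbb R^n$ for some $v$, in which case $\dim V\leq n-1$, or $V\subseteq\Lambda^2(W)$ for a subspace $W$ with $\dim W\leq 3$, in which case $\dim V\leq 3<n-1$ since $n\geq 5$. Step~3 ($\dim\spann E\geq n-1$): argue by contrapositive. Suppose $\dim\spann E\leq n-2$. Then after a suitable change of basis in $\mathbb R^n$, $\spann E$ is contained in a coordinate wedge-subspace of $\Lambda^2(\mathbb R^n)$ of dimension $\leq n-2$. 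This means $\partial_i\eta^j=\partial_j\eta^i$ a.e.\ for all $(i,j)$ in a set so large that at least two of the components $\eta^k$ can be identified (via the Poincar\'e lemma on the simply connected $\Omega$) as partial derivatives of scalar potentials that vanish on $\partial\Omega$; integration by parts then yields $\int_\Omega\eta^k=0$ for enough indices $k$ to contradict $\int_\Omega\eta\neq 0$.

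The main obstacle is Step~3, which is the only place where the $\int_\Omega\eta\neq 0$ hypothesis does real work; one has to translate the algebraic constraint $\spann E\subsetneq v\wedge\mathbb R^n$ into enough pointwise symmetry of $\nabla\eta$ to produce potentials that kill the average. The case-analysis inside the classification used in Step~2 (common-vector case vs.\ three-dimensional-ambient case) has to be carried out in detail here, and the $n\geq 5$ hypothesis enters precisely to exclude the three-dimensional-ambient scenario from interfering with the Poincar\'e-lemma argument.
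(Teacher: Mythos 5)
Your forward direction is broadly workable, but your converse (``$\Leftarrow$'') direction has a real gap. Knowing only that $\dim\spann E=n-1$ and $0\in\rico E$ is not sufficient to ``apply the existence theory'': the existence machinery for $\curl$-inclusions at dimension $n-1$ (Corollary~3.9 of \cite{bandyopadhyay2015some}, which is what the paper invokes) requires the structural fact that $\spann E=\mathbb R^n\wedge b$ for some $b\in\mathbb R^n\setminus\{0\}$. The paper obtains this from its Lemma on isotropic families (every pairwise wedge-isotropic $E$ with $\dim\spann E=n-1$ and $n\geq 5$ spans $\mathbb R^n\wedge b$), and this is exactly where $n\geq 5$ enters the converse: for $n=4$ the set $\{e^1\wedge e^2,e^1\wedge e^3,e^2\wedge e^3\}$ has $\dim\spann E=3=n-1$ but spans no $\mathbb R^4\wedge b$, and the converse fails. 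You never establish this structure, and you locate the role of $n\geq 5$ elsewhere (in your Step~3). Ironically, your own Step~2 classification would hand you the missing fact for free: since $n-1>3$, the ``$\Lambda^2(W)$ with $\dim W=3$'' branch is excluded and $\spann E=v\wedge\mathbb R^n$ follows by dimension count. Your explanation of how $\int_\Omega\eta\neq 0$ is achieved (``a free direction along which the construction can be tilted'') is not an argument; in the paper this is part of the conclusion of the cited existence result once the structure $\mathbb R^n\wedge b$ is in hand. You also give no reason why \emph{every} $e\in E$ is attained on a set of positive measure.

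On the forward direction: your Step~2 is a genuinely different and arguably cleaner route than the paper's. The paper only uses the weaker consequence $\rank[e-f]\leq 2$ to get $\dim\spann E\leq n$ (via an isotropy bound in $\Lambda^2(\mathbb R^4)$ and intersection lemmas) and then excludes $\dim\spann E=n$ by a separate Fourier-transform non-existence theorem; you instead use the full hypothesis $\omega\wedge\omega'=0$ on all of $\spann E$ and the classification of linear subspaces of decomposable $2$-vectors to get $\dim\spann E\leq n-1$ purely algebraically. That is correct, but the classification must be proved or referenced, not merely asserted (the paper essentially proves the relevant dichotomy in its Case~1/Case~2 analysis). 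Your Step~3 is an attempted reproof of the lower bound $\dim\spann E\geq n-1$, which the paper simply cites (Theorem~2.5 of \cite{bandyopadhyay2015some}, proved there by the Fourier argument $\mathbb R^n\wedge\hat\eta(0)\subseteq\spann E$ with $\hat\eta(0)=\int_\Omega\eta\neq 0$). As written your version is incomplete: to contradict $\int_\Omega\eta\neq 0$ you must show $\int_\Omega\eta^k=0$ for \emph{all} $k$, not ``at least two'' or ``enough'' components; a general $(n-2)$-dimensional subspace of $\Lambda^2(\mathbb R^n)$ cannot be moved into a coordinate wedge-subspace by a change of basis of $\mathbb R^n$ (you need the Step~2 classification first); and $\Omega$ is not assumed simply connected, so the Poincar\'e lemma must be applied after extending $\eta$ by zero. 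These are repairable, but the cleanest fix is to use the Fourier argument the paper already has.
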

The next result is about  non-existence of solution when $\dim \spann E=n$ which we will discuss in section $3.$
\begin{theorem}
Let $n\in \mathbb N, n\geq 4$ and $\Omega \subseteq \mathbb R^n$ be open, bounded. Then the following differential inclusion problem 
\begin{equation*}
    \curl \eta \in E \textrm{ a.e.  in }\Omega \textrm{ and } \displaystyle \int\limits_{\Omega }\eta\neq 0
\end{equation*}
 has no solution $\eta \in W_{0}^{1,\infty}(\Omega;\mathbb R^n)$ if  $\dim \spann E=n$  and $\meas \{x\in \Omega: \curl \eta(x)=e\}>0$ for all $e\in E$.
\end{theorem}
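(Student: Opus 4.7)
My plan is to argue by contradiction. Assume there exists $\eta\in W_0^{1,\infty}(\Omega;\mathbb R^n)$ satisfying $\curl\eta\in E$ a.e., $\meas\{x:\curl\eta(x)=e\}>0$ for every $e\in E$, and $\int_\Omega\eta\neq 0$. The starting point is an integration-by-parts identity expressing $\int_\Omega\eta$ in terms of moments of $\curl\eta$. Using $\eta|_{\partial\Omega}=0$ together with $\int_\Omega\partial_k(x_l\eta_l)\,dx=0$ and $\int_\Omega\partial_l(x_l\eta_k)\,dx=0$, one obtains for each $k$
\[
(n-1)\int_\Omega \eta_k\,dx \;=\; \sum_{l=1}^n\int_\Omega x_l\,(\curl\eta)_{kl}\,dx,
\]
or equivalently, for every constant $v\in\mathbb R^n$, $(n-1)\int_\Omega\eta\cdot v\,dx=\int_\Omega\langle v\wedge x,\curl\eta\rangle\,dx$. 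So $\int_\Omega\eta\neq 0$ is equivalent to a nonvanishing first moment of $\omega:=\curl\eta$ against some 2-form $v\wedge x$.

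Next, since $\Omega$ is bounded and every $e\in E$ has a positive-measure preimage, $E$ must be at most countable and $\Omega=\bigsqcup_{e\in E}\Omega_e$ up to measure zero, where $\Omega_e=\{\omega=e\}$. The identity $\omega=d\eta$ forces $d\omega=0$ in the sense of distributions, and integration by parts yields $\int_\Omega\omega=0$, i.e.\ $\sum_{e\in E}\meas(\Omega_e)\,e=0$. The distributional closedness also imposes jump conditions along each interface between $\Omega_e$ and $\Omega_{e'}$: the jump must satisfy $\nu\wedge(e-e')=0$, where $\nu$ is the interface normal 1-form, so $e-e'=\nu\wedge\alpha$ for some 1-form $\alpha$. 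Picking $n$ linearly independent elements $e_1,\ldots,e_n\in E$, which is possible because $\dim\spann E=n$, the aim is to combine (a) the identity in the previous paragraph, rewritten as $(n-1)\int_\Omega\eta=\sum_{e\in E}e\cdot c_e$ with $c_e=\int_{\Omega_e}x\,dx$ and $e$ viewed as an antisymmetric matrix; (b) the linear independence with positive measures; and (c) the jump rigidity — to deduce that $\sum_{e\in E}e\cdot c_e=0$, contradicting $\int_\Omega\eta\neq 0$.

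The hard part will be this last implication: showing that $\dim\spann E=n$ together with closedness and the attainment condition forces all first moments of $\omega$ to cancel. The jump conditions give only local algebraic information, and to leverage them globally one must use not merely closedness of $\omega$ but the full exactness with a primitive vanishing on $\partial\Omega$. I expect the closing argument to proceed by testing the identity against carefully chosen $v$'s and exploiting the fact that the $(n-1)$-parameter family $\{v\wedge x:x\in\Omega\}$ of 2-forms interacts with the maximal $n$-dimensional span of $E$ in such a way that the only compatible configuration has all centroid contributions $e\cdot c_e$ summing to zero. Equivalently, one can try to show that if $\sum_{e}e\cdot c_e\neq 0$, then the partition $\{\Omega_e\}$ cannot simultaneously satisfy the interface compatibility $\nu\wedge(e-e')=0$, the positive-measure conditions, and the vanishing of the primitive $\eta$ on $\partial\Omega$; this is where the linear independence of $e_1,\ldots,e_n$ in $\Lambda^2(\mathbb R^n)$ should provide the decisive obstruction.
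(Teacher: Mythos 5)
Your proposal sets up correct preliminary identities (the moment formula $(n-1)\int_\Omega\eta_k=\sum_l\int_\Omega x_l(\curl\eta)_{kl}$ checks out, and $\int_\Omega\curl\eta=0$ follows from $\eta\in W_0^{1,\infty}$), but the proof has a genuine gap: the entire decisive step is deferred. You write that you ``expect the closing argument to proceed by testing the identity against carefully chosen $v$'s'' and that the linear independence ``should provide the decisive obstruction,'' but no such argument is actually given, and it is precisely this implication --- that $\dim\spann E=n$ forces $\sum_e e\cdot c_e=0$ --- that constitutes the theorem. A symptom that the sketch is not on track: nowhere does your outline use the hypothesis $n\geq 4$, yet the statement is false for $n=3$ (the paper's Remark after Theorem \ref{theorem1'} points to a counterexample with $\int_\Omega\eta\neq 0$), so any correct completion must break for $n=3$, and it is not visible where yours would. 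There is also a technical obstruction in your framework: the ``jump conditions $\nu\wedge(e-e')=0$ along each interface'' presuppose that the level sets $\Omega_e=\{\curl\eta=e\}$ have rectifiable interfaces with well-defined normals, which is not available for a general $W^{1,\infty}$ map; the partition can be wild, and distributional closedness of $\curl\eta$ alone does not yield pointwise interface relations.

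For comparison, the paper's route avoids all geometry of the partition: extend $\eta$ by zero, take the Fourier (cosine) transform so that $\curl\eta\in\spann E$ a.e.\ becomes $x\wedge\hat\eta(x)\in\spann E$ for all $x$, use the attainment condition and Plancherel to get equality $\spann\{x\wedge\hat\eta(x):x\in\mathbb R^n\}=\spann E$, and note $\hat\eta(0)=\int_\Omega\eta\neq 0$. The contradiction then comes from a purely algebraic lemma (Lemma \ref{lemma 1'}): for $n\geq 4$ and continuous $f$ with $f(0)\neq 0$, the span of $\{x\wedge f(x)\}$ can never have dimension exactly $n$. That lemma is where $n\geq 4$ enters and is the real content; your sketch contains no substitute for it.
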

In \cite{ball1987fine}, Ball-James considered two gradient problem and found that rank of the difference of two gradients is less than or equal to $1$ and in a similar way we can show that, in curl case the rank of the difference will be less than or equal to $2$. So, in section $4,$ we will see the following theorem under taking the constraint on the set $E$ that rank of difference of any two elements is less than or equal to $2.$
\begin{theorem}
Let $n\in \mathbb N$ and $\Omega \subseteq \mathbb R^n$ be open, bounded set. Let $E\subseteq \Lambda^2(\mathbb R^n)$ be such that $\rank[e-f]\leq 2$ for any $e,f\in E,$ in other words, there exist $x,y\in \mathbb R^n$ such that $e-f=x\wedge y.$ Then there does not exist any $\eta\in W_{0}^{1,\infty}(\Omega;\mathbb R^n)$ of the following problem 
\begin{equation*}
    \curl \eta \in E \ \ \textrm{ a.e. in } \Omega
\end{equation*}
if $\dim\spann E\geq n+1$ and $\meas\{x\in \Omega :\curl \eta(x)=e\}>0$ for all $e\in E.$
\end{theorem}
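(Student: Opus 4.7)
My plan is to show that the two algebraic hypotheses placed on $E$---pairwise rank-$\leq 2$ differences together with $\dim\spann E \geq n+1$---are in fact incompatible. Consequently the theorem holds vacuously: no such $E$ can exist, so a fortiori no such $\eta$ exists. The argument I have in mind is purely algebraic/Grassmannian and does not invoke the Lipschitz map $\eta$ or the positive-measure condition at all; these hypotheses serve only to place the statement in parallel with the other (non-trivial) results of the paper.

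First I would normalize. Fix any $e_0\in E$ and set $F = E - e_0 \ni 0$. Every $\omega\in F$ has rank $\leq 2$, hence is decomposable, $\omega = a\wedge b$, and corresponds to a $2$-plane $V_\omega = \spann\{a,b\}\subseteq\mathbb{R}^n$. For arbitrary $\omega,\omega'\in F$ the difference is also decomposable, so using $\omega\wedge\omega = \omega'\wedge\omega' = 0$,
\[
0 \;=\; (\omega-\omega')\wedge(\omega-\omega') \;=\; -2\,\omega\wedge\omega',
\]
whence $\omega\wedge\omega'=0$. This is equivalent to $V_\omega\cap V_{\omega'}$ being at least one-dimensional.

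Next I would invoke a classical Grassmannian dichotomy: a family of distinct $2$-planes in $\mathbb{R}^n$ which meet pairwise in lines must either (i) share a common line $\ell$, or (ii) all lie in a common $3$-dimensional subspace $W$. I would prove this by taking three planes $V_1,V_2,V_3$ and examining the three intersection lines $\ell_{12},\ell_{13},\ell_{23}$: if two coincide we obtain a common line for all three, and otherwise each $V_i$ is the span of two of these lines, so $V_1,V_2,V_3\subseteq \spann(\ell_{12}\cup\ell_{13}\cup\ell_{23})$, which has dimension $3$. To add a fourth plane $V_4$ to case (ii) I would show that $V_4\cap W$ cannot be a single line (for such a line would equal each $V_4\cap V_i$ and hence be a line common to $V_1,V_2,V_3$, contradicting the assumed absence of one), and it cannot be zero (since $V_4\cap V_i\subseteq V_4\cap W$ must be $1$-dimensional), so $V_4\cap W$ is $2$-dimensional, i.e.\ $V_4\subseteq W$. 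The corresponding closure argument for case (i) is similar: adding a plane that fails to contain $\ell$ forces all planes into the $3$-dimensional subspace $\spann(\ell,V_4)$.

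Therefore in case (i) $\spann F\subseteq \ell\wedge\mathbb{R}^n$, of dimension $n-1$, and in case (ii) $\spann F\subseteq\Lambda^2(W)$, of dimension $3$. Hence $\dim\spann F\leq\max(n-1,3)$, and since $\dim\spann E\leq 1+\dim\spann F$, we obtain $\dim\spann E\leq n$ for $n\geq 4$, contradicting $\dim\spann E\geq n+1$. For $n\leq 3$ the ambient $\dim\Lambda^2(\mathbb{R}^n)\leq 3 < n+1$ yields the contradiction trivially. The main obstacle is the Grassmannian dichotomy together with its closure under adjoining further planes; the algebra is elementary but one must be careful with degenerate configurations (coinciding planes, coplanar intersection lines) to confirm that no loophole to a third structural type opens up.
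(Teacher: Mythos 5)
Your proposal is correct in substance and, importantly, its overall logic coincides with the paper's: the theorem is established not by any PDE argument but by showing that the two algebraic hypotheses are incompatible, i.e.\ that $\rank[e-f]\leq 2$ for all $e,f\in E$ forces $\dim\spann E\leq n$. Where you differ is in how that bound is obtained. The paper splits into two cases: for $n=4$ it introduces the symmetric bilinear form $B(u,v)=c(u\wedge v)$ on the six-dimensional space $\Lambda^2(\mathbb R^4)$ and observes that the translated set spans an isotropic subspace, hence of dimension at most $3$; for $n\geq 5$ it proves (Lemma \ref{25/2/2022 00}) by a kernel/orthogonal-complement case analysis that $n-1$ independent pairwise-annihilating $2$-forms must span $\mathbb R^n\wedge b$, and then derives a contradiction from $n$ such forms. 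Your route replaces both with the classical Grassmannian dichotomy for pairwise-intersecting $2$-planes (common line versus common $3$-space), which gives $\dim\spann(E-e_0)\leq\max(n-1,3)$ uniformly in $n\geq 4$ and so unifies the two cases the paper treats separately; the two branches of your dichotomy are in fact exactly the two cases of the paper's Lemma \ref{25/2/2022 00}, so your lemma is the cleaner geometric statement underlying the paper's computation. To make your write-up airtight you should: discard $\omega=0$ and note that linearly independent decomposable forms determine distinct $2$-planes (proportional forms share a plane, so coincident planes cost nothing in the span); and, in the closure step that moves a common-line family into a common $3$-space, use the fact that two \emph{distinct} $2$-planes meet in at most one line, so the putative new common line must equal $\ell$, yielding the contradiction. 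These are precisely the degenerate configurations you flag, and none of them opens a third structural type, so the argument closes.
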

In section $5$, we will give one existence result of solution $\eta \in W_0^{1,\infty}(\Omega;\mathbb R^n)$ at dimension $(2n-3)$ for the following differential inclusion problem 
\begin{align*}
&\curl \eta\in E \textrm{ a.e. in }\Omega,\\
&\meas\{x\in \Omega:\curl \eta (x)=e\}>0 \textrm{ for all } e\in E.
\end{align*}
Finally, in section $6$, we show that
\begin{theorem}
Let $n\in \mathbb N,$ $1\leq k \leq n-3.$ Suppose $f:\mathbb R^n\to \Lambda^k(\mathbb R^n)$ be  continuous such that $f(0)\neq 0$ and $f(0)$ is $k$-divisible, i.e., $f(0)=c^1\wedge \ldots \wedge c^k$ for some $c^i\in \mathbb R^n\setminus \{0\}$ for  $i=1,\ldots, k.$ Then 
\begin{equation}\nonumber
\dim \spann \{x\wedge f(x): x\in \mathbb R^n\}\neq n-k+1.
\end{equation}

\end{theorem}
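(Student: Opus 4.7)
The plan is to argue by contradiction, assuming that $V := \spann\{x\wedge f(x): x\in \mathbb{R}^n\}$ has dimension exactly $n-k+1$. First I would establish that $V$ always contains the subspace $V_0 := \spann\{x\wedge f(0): x\in \mathbb{R}^n\}$, via the scaling identity $\tfrac{1}{t}(tx\wedge f(tx))=x\wedge f(tx)\to x\wedge f(0)$ as $t\to 0$, combined with continuity of $f$ and the fact that $V$ is finite-dimensional (hence closed). Since $f(0)=c^1\wedge\ldots\wedge c^k\neq 0$, the $c^i$ are linearly independent, and a change of basis allows one to take $c^i=e_i$, so that $f(0)=\alpha:=e_1\wedge\ldots\wedge e_k$ and $V_0=\alpha\wedge W'$, where $W'=\spann\{e_{k+1},\ldots,e_n\}$. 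Then $\dim V_0=n-k$, so under the contradiction hypothesis there exists $v\notin V_0$ with $V=V_0\oplus\mathbb{R}v$ and a continuous scalar function $\lambda$ such that $\omega(x):=x\wedge f(x)\equiv \lambda(x)v\pmod{V_0}$.

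Next I would exploit the algebraic identity $x\wedge\omega(x)=x\wedge x\wedge f(x)=0$. Writing $\omega(x)=v_0(x)+\lambda(x)v$ with $v_0(x)\in V_0$, this yields $\lambda(x)(x\wedge v)=-x\wedge v_0(x)\in x\wedge V_0$. A direct computation (using $x_W\wedge\alpha=0$ for the $W$-component $x_W$ of $x$, where $W=\spann\{e_1,\ldots,e_k\}$) shows that $x\wedge V_0\subseteq V_1:=\alpha\wedge \Lambda^2 W'\subseteq \Lambda^{k+2}(\mathbb{R}^n)$. Consequently, on the open set $\{x:\lambda(x)\neq 0\}$ the linear map
\[
\phi:\mathbb{R}^n\to \Lambda^{k+2}(\mathbb{R}^n)/V_1,\qquad \phi(x):=x\wedge v\pmod{V_1},
\]
must vanish; in other words $\{\lambda\neq 0\}\subseteq \ker\phi$.

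The last step is to show $\phi\not\equiv 0$, which forces $\ker\phi$ to be a proper linear subspace of $\mathbb{R}^n$ and hence to have empty interior; combined with the openness of $\{\lambda\neq 0\}$ this gives $\lambda\equiv 0$, so $V\subseteq V_0$, contradicting $\dim V=\dim V_0+1$. The key ingredient is the characterization $V_1=\{\eta\in\Lambda^{k+2}(\mathbb{R}^n):e_j\wedge\eta=0\text{ for all }j=1,\ldots,k\}$ (every basis term of $\eta$ must contain $e_1,\ldots,e_k$). Indeed, if $\phi\equiv 0$ then $e_j\wedge(x\wedge v)=0$, equivalently $x\wedge(e_j\wedge v)=0$, for all $x\in\mathbb{R}^n$ and $j\le k$. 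Here the hypothesis $k\le n-3$ enters critically: since $k+3\le n$ the space $\Lambda^{k+3}(\mathbb{R}^n)$ is nonzero, so the map $x\mapsto x\wedge \eta$ on $\mathbb{R}^n$ vanishes identically only when $\eta=0$; applied to $\eta=e_j\wedge v\in\Lambda^{k+2}(\mathbb{R}^n)$ this yields $e_j\wedge v=0$ for each $j=1,\ldots,k$, whence $v$ is divisible by every $e_j$, and therefore by $\alpha$, so $v\in\alpha\wedge\mathbb{R}^n=V_0$, contradicting $v\notin V_0$.

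The principal obstacle is identifying the correct auxiliary space $V_1$ and converting the pointwise constraint $x\wedge v\in V_1$ into the algebraic conclusion $v\in V_0$; the hypothesis $k\le n-3$ is used precisely to guarantee that wedging a nonzero element of $\Lambda^{k+2}(\mathbb{R}^n)$ against every vector of $\mathbb{R}^n$ cannot vanish identically. The other ingredients, namely the scaling argument that embeds $V_0$ in $V$ and the observation that a proper linear subspace of $\mathbb{R}^n$ has empty interior, are routine.
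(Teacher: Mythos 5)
Your proof is correct and follows essentially the same route as the paper: both embed $\mathbb{R}^n\wedge f(0)$ into the span via continuity, split off a one-dimensional complement, exploit the identity $x\wedge x\wedge f(x)=0$ together with wedging by the factors $c^j$ of $f(0)$, and invoke $k\le n-3$ at exactly the same point to conclude that a $(k+2)$-form annihilated by wedging with every vector must vanish. The only difference is in the endgame and is cosmetic: you show the complementary direction $v$ would be divisible by each $c^j$ and hence lie in $\mathbb{R}^n\wedge f(0)$, contradicting the choice of complement, whereas the paper takes $\omega$ orthogonal to $\mathbb{R}^n\wedge f(0)$ and uses the interior-product identity $f(0)\lrcorner\,\omega=0$ to force $\omega=0$; your version avoids the interior-product machinery but is otherwise the same argument.
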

The results of differential inclusion problems can be applied, embracing the notions due to Cellina \cite{cellina1993minima,cellina1993minimanecessary} and Friesecke \cite{Friesecke}, to obtain solutions for a non-convex variational problem. In particular, according to \cite{bandyopadhyay2007differential}, one can show that: 
\begin{theorem}
    Let $\Omega\subseteq \mathbb R^n$ be a bounded, open set, $0\leq k\leq n-1$ and $$f:\Lambda^{k+1}(\mathbb R^n)\to \mathbb R_{+}$$
    be lower semi-continuous. Let 
 \begin{equation}\label{05/04/2024 eq 1} 
        \inf \left\{\displaystyle\int\limits_{\Omega}f(\textrm{d}\eta(x))\textrm{d}x:\eta\in W_{0}^{1,\infty}\left(\Omega;\Lambda^k(\mathbb R^n)\right)\right\}
    \end{equation}
    and $K=\left\{\xi \in \Lambda^{k+1}(\mathbb R^n):f^{**}(\xi)<f(\xi)\right\}$, where $f^{**}$ is the convex envelope of $f.$ Assume that $K$ is connected and $0\in K.$ If $K$ is bounded and $f^{**}$ is affine on $K$ then (\ref{05/04/2024 eq 1}) has a solution.
\end{theorem}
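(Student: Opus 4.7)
Following the scheme of Cellina and Friesecke, adapted to $k$-forms as in \cite{bandyopadhyay2007differential}, I will reduce the non-convex variational problem to a differential inclusion of the form $d\eta\in\partial K$ a.e.\ in $\Omega$, and then appeal to an existence theorem for such inclusions.

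\emph{Step 1 (Lower bound via Jensen).} For every admissible $\eta\in W_0^{1,\infty}(\Omega;\Lambda^k(\mathbb R^n))$, integration by parts coordinate-wise on each component of the $k$-form gives $\int_\Omega d\eta\,dx = 0$ in $\Lambda^{k+1}(\mathbb R^n)$, thanks to the zero boundary trace. Since $f^{**}$ is convex and lsc on a finite-dimensional space, hence continuous, Jensen's inequality yields
\[
\int_\Omega f(d\eta)\,dx \;\geq\; \int_\Omega f^{**}(d\eta)\,dx \;\geq\; |\Omega|\,f^{**}(0).
\]
Thus the infimum in (\ref{05/04/2024 eq 1}) is bounded below by $|\Omega|\,f^{**}(0)$.

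\emph{Step 2 (The affine extension and reduction to an inclusion).} The hypothesis that $f^{**}$ is affine on the open set $K$ with $0\in K$ supplies an affine functional $L(\xi)=\langle a,\xi\rangle + f^{**}(0)$ that agrees with $f^{**}$ on $K$; being a supporting affine of the convex function $f^{**}$ at each interior point of $K$, it satisfies $L\leq f^{**}$ globally, and by continuity $L\equiv f^{**}$ on $\overline K$. On the closed complement of $K$ one has $f^{**}\geq f$ by the very definition of $K$, which combined with the universal inequality $f^{**}\leq f$ forces $f = f^{**}$ there; in particular $f\equiv L$ on $\partial K$. Consequently, any $\eta\in W_0^{1,\infty}(\Omega;\Lambda^k(\mathbb R^n))$ with $d\eta\in\partial K$ a.e.\ satisfies
\[
\int_\Omega f(d\eta)\,dx \;=\; \int_\Omega L(d\eta)\,dx \;=\; \left\langle a,\int_\Omega d\eta\,dx\right\rangle + |\Omega|\,f^{**}(0) \;=\; |\Omega|\,f^{**}(0),
\]
achieving equality in the lower bound of Step 1.

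\emph{Step 3 (Solving the differential inclusion $d\eta\in\partial K$).} It remains to exhibit one such $\eta$. Since $K$ is open, bounded, connected and contains $0$, the trivial map $\eta\equiv 0$ furnishes a starting point with $d\eta = 0\in K$. A Baire category / convex integration scheme adapted to the exterior-derivative operator (of the type developed in \cite{bandyopadhyay2007differential,bandyopadhyay2015some} and in the earlier sections of this paper) then produces a dense $G_\delta$ collection of $\eta \in W_0^{1,\infty}(\Omega;\Lambda^k(\mathbb R^n))$ with $d\eta\in\partial K$ a.e., by iteratively pushing $d\eta$ toward $\partial K$ while preserving the zero boundary trace.

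\emph{Main obstacle.} The analytic heart of the argument is Step 3: one needs an \emph{approximation lemma} saying that, given a piecewise-affine $k$-form $\eta_0$ whose exterior derivative takes values in $\overline K$, one can perturb $\eta_0$ within its boundary data, by a $W^{1,\infty}$-small amount, so that $d\eta$ lies strictly closer to $\partial K$ on a set of large measure. This is the $d$-analogue of the rank-one laminates used in the gradient case, and its existence is exactly what the framework of \cite{bandyopadhyay2007differential,bandyopadhyay2015some} provides under the conditions that $K$ be open, bounded, connected, and contain $0$. Once this building block is in place, iterating via Baire's theorem in the complete metric space of admissible forms delivers an $\eta$ attaining the infimum, and Steps 1--2 supply the matching value $|\Omega|\,f^{**}(0)$.
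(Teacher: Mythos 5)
The paper itself gives no proof of this theorem: it is stated in the introduction as a consequence of \cite{bandyopadhyay2007differential}, so there is no in-paper argument to compare yours against. Judged on its own terms, your reduction is the right one and Steps 1--2 are essentially complete and correct: $\int_\Omega d\eta=0$ for zero boundary trace, Jensen gives the lower bound $|\Omega|f^{**}(0)$; $K$ is open because $f$ is lsc and $f^{**}$ (convex and finite on a finite-dimensional space) is continuous; the single affine function $L$ coinciding with $f^{**}$ on the connected open set $K$ is a supporting affine, hence $L\le f^{**}\le f$ everywhere and $f=f^{**}=L$ on $\partial K$; so any $\eta$ with $d\eta\in\partial K$ a.e.\ attains the lower bound. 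This is exactly the Cellina--Friesecke scheme that the cited reference implements for exterior derivatives.

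The gap is Step 3, and you are right to flag it as the analytic heart: as written, the entire content of the theorem is outsourced to an unproved ``approximation lemma.'' To close it you do not need to redo convex integration, but you do need to (a) identify the precise hypothesis under which \cite{bandyopadhyay2007differential} solves $d\eta\in E$ a.e.\ with $\eta\in W_0^{1,\infty}(\Omega;\Lambda^k)$, namely that $E$ be bounded with $0\in\intt\co E$ (interior taken in $\Lambda^{k+1}$), and (b) verify it for $E=\partial K$. Point (b) is easy but should be said: for a bounded open $K$ and any $x\in K$, intersecting a line through $x$ with $K$ shows $x$ is a convex combination of two points of $\partial K$, so $K\subseteq\co\partial K$ and hence $0\in K\subseteq\intt\co\partial K$; boundedness of $\partial K$ comes from boundedness of $K$. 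With that verification, Step 3 becomes a citation of a specific theorem rather than a gesture at ``the framework,'' and the proof is complete. One further small point worth making explicit: the connectedness of $K$ is what licenses the phrase ``a single affine functional $L$ agreeing with $f^{**}$ on $K$'' in Step 2; without it, $f^{**}=L$ could fail on the boundary of components of $K$ not containing $0$, and the equality $\int_\Omega f(d\eta)=|\Omega|f^{**}(0)$ would break.
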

Many results as well as applications of differential inclusions can be observed in Dacorogna-Pisante \cite{PisanteDacorogna}, Dacorogna-Fonseca \cite{DacorognaFonseca}, Dacorogna-Marcellini \cite{DacorognaMarcellini}, Blasi-Pianigiani \cite{BlasiPianigiani}, Sil \cite{SwarnenduACV} and Sychev \cite{Sychev}.
\section{Notations}
We gather here some notations which will be used throughout this article. For more details on exterior algebra and differential forms see \cite{csato2011pullback} and for convex analysis see \cite{DirectBernard} or \cite{Rockafellar}.
\begin{itemize}
\item [$(1)$] Let $k,n$ be two integers. 
   \begin{itemize}
    \item [$\bullet$] We write $\Lambda^k(\mathbb R^n)$ (or simply $\Lambda^k$) to denote the vector space of all alternating $k$-linear maps $f:\underbrace{\mathbb R^n \times \cdots \times \mathbb R^n }_{\textrm{k-times}}\to \mathbb R.$ For $k=0,$ we set $\Lambda^0(\mathbb R^n)=\mathbb R.$ Note that $\Lambda^k(\mathbb R^n)=\{0\}$ for $k>n$ and, for $k\leq n,$ $\dim \left(\Lambda^k(\mathbb R^n)\right)=\binom{n}{k}.$
    \item [$\bullet$] $\wedge,$ $\lrcorner$, $\langle;\rangle$ and, respectively, $*$ denote the exterior product, interior product, the scalar product and, respectively, the Hodge star operator.
    \item [$\bullet$] For $b\in \Lambda^k,$ $\rank[b]$ denotes the rank of the exterior $k$-form $b.$
    \item [$\bullet$] If $\{e^1,\ldots, e^n\}$ is a basis of $\mathbb R^n$, then, identifying $\Lambda^1$ with $\mathbb R^n,$ $$\{e^{i_1}\wedge \cdots \wedge e^{i_k}:1\leq i_1<\cdots < i_k\leq n\}$$
    is a basis of $\Lambda^k.$
    \item [$\bullet$] For $E\subseteq \Lambda^k,$ $\spann E$ denotes the subspace spanned by $E.$
    \item [$\bullet$] Let $W$ be a subspace of $\Lambda^k$. We write $\dim W$ to denote the dimension of $W$ and $W^{\perp}$ to denote the orthogonal complement of $W.$
    \item [$\bullet$] For $b\in \Lambda^k,$ we write, identifying again $\Lambda^1$ with $\mathbb R^n,$ $$\mathbb R^n \wedge b=\Lambda^1\wedge b=\{x\wedge b:x\in \Lambda^1\}\subseteq \Lambda^{k+1}.$$
   \end{itemize}
   \item [(2)] Let $\Omega \subseteq \mathbb R^n$ be a bounded open set. 
   \begin{itemize}
       \item [$\bullet$] The spaces $C^1(\Omega; \Lambda^k),$ $W^{1,p}(\Omega;\Lambda^k)$ and $W^{1,p}_{0}(\Omega;\Lambda^k),$ $1\leq p\leq \infty$ are defined in the usual way.
       \item [$\bullet$] For $\eta \in W^{1,p}(\Omega;\Lambda^k),$ $\displaystyle \int\limits_{\Omega}\eta\ $ denotes the exterior $k$-form obtained by integrating componentwise the differential form $\eta$. Explicitly, for $1\leq i_1<\cdots < i_k\leq n,$ $$\left(\displaystyle\int\limits_{\Omega}\eta\right)_{i_1\cdots i_k}=\int\limits_{\Omega}\eta_{i_1\cdots i_k}.$$
       \item [$\bullet$] For $\eta\in W^{1,p}(\Omega;\Lambda^k),$ the exterior derivative $d\eta$ belongs to $L^p(\Omega;\Lambda^{k+1})$ and is defined by $$\left(d\eta\right)_{i_1\cdots i_{k+1}}=\displaystyle\sum\limits_{j=1}^{k+1}(-1)^{j+1}\frac{\partial \eta_{i_1\cdots i_{j-1}i_{j+1}\cdots i_{k+1}}}{\partial x_{i_j}},$$
       for $1\leq i_1<\cdots < i_{k+1}\leq n.$ If $k=0,$ then $d\eta \simeq \grad \eta$ . If $k=1,$ then for $1\leq i<j\leq n, $ $$(d\eta)_{ij}=\frac{\partial \eta_j}{\partial x_i}-\frac{\partial \eta_i}{\partial x_j}$$
       i.e., $d\eta \simeq \curl \eta.$
   \end{itemize}
   \item[(3)] For subsets $C,V \subseteq \Lambda^k,$
   \begin{itemize}
       \item [$\bullet$] $\co C$ denotes the convex hull of $C;$
       \item [$\bullet$] $\intt_{V}C$ denotes the interior of $C$ with respect to the topology relative to $V.$ 
   \end{itemize}
   \item [(4)] For a convex set $C\subseteq \Lambda^k,$ 
   \begin{itemize}
       \item [$\bullet$] $\aff C$ denotes the affine hull of $C$ which is the intersection of all affine subsets of $\Lambda^k$ containing $C;$
       \item[$\bullet$]  $\ri C$ denotes the relative interior of $C$ which is the interior of $C$ with respect to the topology relative to affine hull of $C.$ Equivalently $\ri C=\intt_{\aff C}C$; 
       \item [$\bullet$] $\rbd C$ denotes the relative boundary of $C$ which is $\overline{C}\setminus \ri C.$ 
   \end{itemize}
   \item[(5)] For a set $A\subseteq \mathbb R^n$ $\meas (A)$ denotes the Lebesgue measure of $A.$
   \item[(6)] $\mathbb R_{+}$ denotes the set of all non-negative real numbers.
\end{itemize}
\section{Non Existence of Solution: Dimensionality of E}
In this section, we will prove a non-existence result, namely, that there is no $\eta\in W_0^{1,\infty}(\Omega;\mathbb R^n)$ satisfying $$\curl \eta \in E, \textrm{ a.e. in }\Omega, \displaystyle\int\limits_{\Omega}\eta\neq 0,$$ when $\dim \spann E=n.$ The following lemma plays the main role.
\begin{lemma}\label{lemma 1'}
Let $n\in \mathbb N,$ $n\geq 4$ and let $f:\mathbb R^n \to \mathbb R^n$ be continuous  with $f(0)\neq 0$. Then $$\dim \spann \{x\wedge f(x):x\in \mathbb R^n\}\neq n.$$
\end{lemma}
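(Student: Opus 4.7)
My plan is to argue by contradiction: suppose that $S := \spann\{x\wedge f(x): x\in \mathbb R^n\}$ has dimension exactly $n$. I would first show that the whole $(n-1)$-dimensional linear subspace $V := \mathbb R^n\wedge v$, with $v := f(0)\neq 0$, is contained in $S$. Indeed, writing $g(x) := x\wedge f(x)$, for each $y\in \mathbb R^n$ and each $\lambda\neq 0$ one has $\tfrac{1}{\lambda}g(\lambda y) = y\wedge f(\lambda y) \in S$; letting $\lambda\to 0$ and using that the finite-dimensional subspace $S$ is closed together with the continuity of $f$, we obtain $y\wedge v\in S$. Since $\dim V = n-1 < n = \dim S$, there exists $w\in S\setminus V$ and $S = V\oplus \mathbb R w$.

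Next I would fix coordinates so that $v = e_n$. Then a natural complement of $V$ in $\Lambda^2(\mathbb R^n)$ is $U := \spann\{e_i\wedge e_j : 1\leq i<j\leq n-1\}$; let $\pi:\Lambda^2(\mathbb R^n)\to U$ denote the projection along $V$. A direct computation gives $\pi(x\wedge f(x)) = x'\wedge f'(x)$, where $x' := (x_1,\ldots,x_{n-1})$ and $f'(x) := (f_1(x),\ldots,f_{n-1}(x))$ are viewed as vectors in $\mathbb R^{n-1}$. Setting $w_U := \pi(w)$, which is nonzero since $w\notin V = \ker\pi$, the condition $g(x)\in S$ for every $x$ becomes: for each $x\in \mathbb R^n$, the simple $2$-form $x'\wedge f'(x)\in \Lambda^2(\mathbb R^{n-1})$ is a (possibly zero) scalar multiple of the fixed form $w_U$.

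From here I would split into two subcases. If $w_U$ is non-simple (which can only occur when $n\geq 5$), then no nonzero scalar multiple of $w_U$ is a simple $2$-form, so $x'\wedge f'(x)\equiv 0$. If instead $w_U = a\wedge b$ with $a,b\in \mathbb R^{n-1}$ linearly independent, then whenever $x'\wedge f'(x)\neq 0$ the $2$-plane $\spann\{x',f'(x)\}$ must coincide with $\spann\{a,b\}$, and in particular $x'\in \spann\{a,b\}$; since $n-1\geq 3$, the $2$-plane $\spann\{a,b\}$ has open dense complement in $\mathbb R^{n-1}$, so $x'\wedge f'(x)$ vanishes on an open dense subset of $\mathbb R^n$, and hence everywhere by continuity. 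In both subcases $\pi(g(x))\equiv 0$, so $g(x)\in V$ for every $x\in\mathbb R^n$; this gives $S\subseteq V$, contradicting $\dim S = n > n-1 = \dim V$.

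The step I expect to be most delicate is the simple-wedge subcase: it is where the continuity of $f$ is really used---to propagate the vanishing of $x'\wedge f'(x)$ from the open dense set $\{x: x'\notin \spann\{a,b\}\}$ to all of $\mathbb R^n$---and it is precisely this step that forces the hypothesis $n\geq 4$, since if $n$ were $3$ the plane $\spann\{a,b\}$ could equal the whole of $\mathbb R^{n-1}=\mathbb R^2$ and the open-dense argument would collapse.
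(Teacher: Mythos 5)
Your proof is correct, and it takes a genuinely different route from the paper's. The two arguments share only the opening moves: assume $\dim S=n$, establish $\mathbb R^n\wedge f(0)\subseteq S$ (the paper quotes this as Proposition 2.2 of Bandyopadhyay--Dacorogna--Kneuss; your scaling-and-limit argument is precisely its proof), and split off a one-dimensional complement. From there the paper works inside $\Lambda^2(\mathbb R^n)$ with an \emph{orthogonal} complement $\omega$ of $\mathbb R^n\wedge f(0)$ in $S$: it uses Cartan's lemma to rule out $x\wedge f(x)\wedge f(0)\equiv 0$, picks a basis $\{a^i\}$ of $\mathbb R^n$ with $a^i\wedge f(a^i)\wedge f(0)\neq 0$, wedges the decomposition $a^i\wedge f(a^i)=c_i\wedge f(0)+\beta_i\omega$ with $f(0)\wedge a^i$ to force $\omega\wedge f(0)=0$, and then combines this with $f(0)\lrcorner\,\omega=0$ via the contraction identity $\|f(0)\|^2\omega=f(0)\lrcorner\left(f(0)\wedge\omega\right)+f(0)\wedge\left(f(0)\lrcorner\,\omega\right)$ to conclude $\omega=0$. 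You instead pass to the quotient by $V=\mathbb R^n\wedge f(0)$, identify it with $\Lambda^2(\mathbb R^{n-1})$ so that $x\wedge f(x)$ projects to the simple form $x'\wedge f'(x)$, which must then lie on the line $\mathbb R w_U$, and dispose of both cases (with $w_U$ non-simple, or simple with associated plane of codimension at least one in $\mathbb R^{n-1}$) by the fact that a nonzero decomposable $2$-form determines its $2$-plane, plus an open-dense continuity argument. Your route buys freedom from Cartan's lemma, the interior-product machinery, and the construction of the special basis $\{a^i\}$, at the cost of a case analysis; it also makes visible exactly where $n\geq 4$ enters (the plane $\spann\{a,b\}$ must be proper in $\mathbb R^{n-1}$), which matches the paper's explicit $n=3$ counterexample. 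Both proofs use the continuity of $f$ and the closedness of the finite-dimensional subspace $S$ in the same essential places.
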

\begin{remark}
    Lemma \ref{lemma 1'} is not true when $n=3.$ Let us define $f:\mathbb R^3 \to \mathbb R^3$ by $$f(x):=(e^1\otimes e^1)x+e^2, \textrm{ for all } x\in \mathbb R^3.$$ Then $\dim \spann \{x\wedge f(x):x\in \mathbb R^3\}=3.$ To see this, we note that $e^1\wedge f(e^1)=e^1\wedge e^2, e^3\wedge f(e^3)=-e^2\wedge e^3,$ and $(e^1+e^3)\wedge f(e^1+e^3)=e^1\wedge e^2-e^2\wedge e^3-e^1\wedge e^3.$ 
\end{remark}
\begin{proof}
Let us set $$\mathcal S:=\spann \{x\wedge f(x):x\in \mathbb R^n\}.$$ We prove by contradiction. Let us suppose to the contrary that $\dim \mathcal S=n$ . Note that, $\mathbb R^n \wedge f(0)\subseteq \mathcal S,$ see Proposition $2.2$ of \cite{bandyopadhyay2015some}. Furthermore, 
 we can find $x_0\in \mathbb R^n$  such that $x_0\wedge f(x_0)\wedge f(0)\neq 0 .$ Indeed, if this was  not the case, we would have  $$x\wedge f(x)\wedge f(0)=0 \textrm{ for all } x\in \mathbb R^n.$$
 Cartan's lemma, see Theorem $2.42$ of \cite{csato2011pullback}, then guarantees the existence of  $u_x\in \mathbb R^n$ satisfying $$x\wedge f(x)=u_x\wedge f(0)\textrm{ for all, }x\in \mathbb R^n,$$
 which implies that $$\mathcal S=\mathbb R^n \wedge f(0).$$
 This is a contradiction as  $\dim (\mathbb R^n\wedge f(0))=n-1$ [ Lemma $2.1$ of \cite{bandyopadhyay2015some} ], whereas $\dim \mathcal S=n.$ Therefore, we indeed have a $x_0\in \mathbb R^n\setminus \{0\}$ such that $$x_0\wedge f(x_0)\wedge f(0)\neq 0.$$
 Since $f$ is continuous at $x_0$, there exists an $\epsilon >0$ such that $$x\wedge f(x)\wedge f(0)\neq 0\textrm{ for all  }x\in B_{\epsilon}(x_0).$$
Let us find a basis  $\{a^1,\ldots, a^n\}$ of $\mathbb R^n$ inside $B_{\epsilon}(x_0)$. Then 
\begin{equation} \label{ eq 2'not n}
a^i\wedge f(a^i)\wedge f(0)\neq 0 \textrm{ for all } i=1,\ldots, n.
\end{equation}
%\hspace*{6 cm}\hfill          $(*_1)$
Let us write $$\mathcal S=\left[\mathbb R^n \wedge f(0)\right ]\oplus \left[\mathbb R^n \wedge f(0)\right ]^{\perp}=\left[\mathbb R^n \wedge f(0)\right]\oplus \spann \{\omega \},$$ where $\omega \in \left[\mathbb R^n \wedge f(0)\right]^{\perp}\setminus \{0\}.$ For each $i=1,\ldots,n,$ we have 
\begin{equation}\label{02/4/2024 eq 1}
a^i\wedge f(a^i)=c_i\wedge f(0)+\beta_i\omega,
\end{equation}
for some $c_i\in \mathbb R^n$ and $\beta_i\in \mathbb R.$ Note that, thanks to equation \ref{ eq 2'not n}, we have $\beta_i\neq 0,$ for all $i=1,\ldots,n$. It follows from Equation \ref{02/4/2024 eq 1} that, for all $i=1,\ldots,n$,   $$\beta_i \  \omega \wedge f(0)\wedge a_i=0.$$
Since $\beta_i\neq 0$ for every $i=1,\ldots,n$, we have $$\omega \wedge f(0)\wedge a^i=0 \textrm{ for every } i=1, \ldots, n,$$
which implies that $\omega \wedge f(0)=0$ as $\{a^1,\ldots, a^n\}$ is a basis of $\mathbb R^n$. Using Proposition $2.16$ of \cite{csato2011pullback}, we also note that 
\begin{align*}
\langle f(0)\lrcorner \omega; x\rangle &=(-1)^{(1+1)}\langle \omega; f(0)\wedge x\rangle\\
&=0, \textrm{ for all }x\in \mathbb R^n,
\end{align*}
as $\omega \in \mathcal {S}\cap \left[\mathbb R^n \wedge f(0)\right]^{\perp}.$ It follows that $f(0)\lrcorner  \omega =0.$
This, combined with $\omega \wedge f(0)=0$ and Proposition $2.16$ of \cite{csato2011pullback} implies that 
\begin{align*}
    \|f(0)\|^2  \omega = f(0)\lrcorner \left( f(0)\wedge \omega\right) +f(0)\wedge \left(f(0)\lrcorner \ \omega\right)=0.
\end{align*}
Since $f(0)\neq 0$, we have $\omega=0$, which is a contradiction. 
Therefore $\dim \spann \{x\wedge f(x):x\in \mathbb R^n\}\neq n.$ 
\end{proof}
\begin{theorem}\label{theorem1'}
Let $n\in \mathbb N$ with $n\geq 4$, let $\Omega \subseteq \mathbb R^n$ be open, bounded and let $E\subseteq \Lambda^2(\mathbb R^n)$. Then there is no $\eta\in W_0^{1,\infty}(\Omega;\mathbb R^n)$ satisfying 
\begin{equation}\label{ eq 2'}
    \curl \eta \in E \textrm{ a.e.  in }\Omega \textrm{ and } \displaystyle \int\limits_{\Omega }\eta\neq 0
\end{equation}
 if  
 \begin{enumerate}
     \item [(i)] $\dim \spann E=n$, and 
     \item [(ii)] $\meas \{x\in \Omega: \curl \eta(x)=e\}>0$ for all $e\in E$
 \end{enumerate}
\end{theorem}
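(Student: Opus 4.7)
My approach is by contradiction, with Lemma \ref{lemma 1'} as the key leverage. The plan is to extract from a hypothetical solution $\eta$ a continuous vector field $f:\mathbb R^n \to \mathbb R^n$ with $f(0) \neq 0$ for which $\spann\{x\wedge f(x) : x \in \mathbb R^n\}$ coincides with $\spann E$, hence has dimension $n$, in violation of the lemma.

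Concretely, I would set $f(x) := \int_{\Omega} e^{\langle x,y\rangle}\,\eta(y)\,dy$, which is real-analytic (hence continuous) in $x$ and satisfies $f(0) = \int_\Omega \eta \neq 0$ by the integral condition in \eqref{ eq 2'}. Since $\eta \in W_0^{1,\infty}(\Omega;\mathbb R^n)$, componentwise integration by parts annihilates the boundary terms and yields
\[
\int_{\Omega} e^{\langle x,y\rangle}\,\curl \eta(y)\,dy \;=\; -\,x \wedge f(x)\qquad \text{for every } x \in \mathbb R^n.
\]
Because $\curl\eta \in E$ almost everywhere, the left-hand side lies in $\spann E$ for every $x$, so at once $\spann\{x\wedge f(x) : x \in \mathbb R^n\} \subseteq \spann E$.

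The crux is to promote this inclusion to an equality. I introduce the continuous linear map $T: C(\overline\Omega) \to \Lambda^2(\mathbb R^n)$ defined by $T(\psi) := \int_\Omega \psi\,\curl \eta$. Hypothesis (ii) supplies, for each $e\in E$, a positive-measure set $A_e = \{y : \curl\eta(y) = e\}$; approximating $\chi_{A_e}$ in $L^1(\Omega)$ by continuous functions places $\meas(A_e)\,e$ in the image of $T$, so $T(C(\overline\Omega)) = \spann E$. Now the family $\{e^{\langle x,\cdot\rangle} : x \in \mathbb R^n\}$ is closed under multiplication, so its linear span is a subalgebra of $C(\overline\Omega)$ containing the constants and separating points of $\overline\Omega$. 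By Stone--Weierstrass this span is dense in $C(\overline\Omega)$, and since the image of $T$ lies in the finite-dimensional space $\Lambda^2(\mathbb R^n)$ (hence is closed), one deduces
\[
T\bigl(\spann\{e^{\langle x,\cdot\rangle} : x \in \mathbb R^n\}\bigr) = \spann E.
\]
As $T(e^{\langle x,\cdot\rangle}) = -\,x\wedge f(x)$, this forces $\spann\{x\wedge f(x) : x\in\mathbb R^n\} = \spann E$, a space of dimension $n$. This directly contradicts Lemma \ref{lemma 1'}.

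The main obstacle I anticipate is the equality step: one must genuinely invoke hypothesis (ii) (not merely $\curl\eta\in E$ a.e.) in order to realise all of $\spann E$ inside the image of $T$, and apply Stone--Weierstrass on the compact set $\overline\Omega$ to pass from density in $C(\overline\Omega)$ to equality of finite-dimensional images. The remaining ingredients -- the integration-by-parts identity, real-analyticity of $f$, and the concluding appeal to Lemma \ref{lemma 1'} -- are essentially routine.
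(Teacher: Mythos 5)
Your proposal is correct and follows essentially the same route as the paper: both construct from a hypothetical solution $\eta$ a continuous map $f:\mathbb R^n\to\mathbb R^n$ with $f(0)=\int_\Omega\eta\neq 0$ and $\spann\{x\wedge f(x):x\in\mathbb R^n\}=\spann E$, and then contradict Lemma \ref{lemma 1'}. The differences are only technical --- the paper uses the (cosine) Fourier transform and Plancherel's theorem where you use the exponential kernel together with Stone--Weierstrass and finite-dimensionality of the image --- and both arguments invoke hypothesis (ii) at exactly the same point, namely to upgrade the inclusion $\spann\{x\wedge f(x)\}\subseteq\spann E$ to an equality.
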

\begin{remark}
    Theorem \ref{theorem1'} is not true when $n=3,$ see Theorem $4.15$ of \cite{bandyopadhyay2007differential}. The solution $\eta$ constructed in the proof of Theorem $4.15$ of \cite{bandyopadhyay2007differential} has the property that $\displaystyle\int\limits_{\Omega}\eta \neq 0.$
\end{remark}
\begin{remark}
The case $n\leq 3$ has been done completely in \cite{bandyopadhyay2007differential}. We don't need to take the case $\int\limits_{\Omega}\eta\neq 0$ for $n\leq 3.$
\end{remark}
\begin{proof}
Let $$\mathcal P: \Lambda^2({\mathbb R^n})\to \Lambda^2(\mathbb R^n)$$
be the projection onto the orthogonal complement of $\spann E.$ Since $\eta \in W_{0}^{1,\infty}(\Omega;\mathbb R^n)$ extending $\eta $ by $0$ to $\mathbb R^n$, it follows that $$\mathcal P(\curl \eta)=0 \textrm{ a.e. in }\mathbb R^n.$$
Applying the Fourier transform, we obtain $$\mathcal P(x\wedge \hat{\eta}(x))=0 \textrm{ for all }x\in \mathbb R^n$$
which implies that $$x\wedge \hat{ \eta}(x)\in \spann E \textrm{ for all } x\in \mathbb R^n,$$
where $\hat{ \eta}(x)=\displaystyle \int\limits_{\mathbb R^n}\eta(y)\cos (2\pi \langle x;y \rangle) dy.$ Together with the Proposition $2.2$ of \cite{bandyopadhyay2015some} and the above we can conclude that  
\begin{equation}\label{eq 3'}
    \mathbb R^n \wedge \hat{ \eta}(0)\subseteq \spann \{x\wedge \hat{ \eta}(x): x\in \mathbb R^n\}\subseteq \spann E.
\end{equation}
We will now show that $$\spann \{x\wedge \hat{ \eta}(x):x\in \mathbb R^n\}=\spann E.$$
Suppose not, i.e., $\spann \{x\wedge \hat{\eta}(x): x\in \mathbb R^n\}\subsetneqq \spann E$. 
Let $m\in \spann \{x\wedge \hat{ \eta}(x):x\in \mathbb R^n\}^{\perp}$. Then 
$$\langle x\wedge \hat{\eta} (x);m\rangle =0 \textrm{ for all } x\in \mathbb R^n.$$
Using Plancherel Theorem, this implies that $$\langle \curl \eta (x);m \rangle =0\textrm{ for all } x\in \mathbb R^n \textrm{ a.e. } $$
and hence $$\langle e;m \rangle =0 \textrm{ for all }e\in E.$$  This gives us that $m\in (\spann E)^{\perp}.$ So, $\spann E\subseteq \spann \{x\wedge \hat{\eta}(x): x\in \mathbb R^n\}$.
%To prove $$\spann \{\curl \omega (x):x\in \Omega \textrm{ a.e. }\}\subseteq \spann \{x\wedge \hat{ \omega}(x): x\in \mathbb R^n\},$$ let $m\in \spann \{x\wedge \hat{ \omega}(x):x\in \mathbb R^n\}^{\perp},$ then $$\langle m;x\wedge \hat{ \omega}(x) \rangle \textrm{ for all }x\in \mathbb R^n.$$ Thus Plancherel theorem implies that $$\langle m;\curl \omega (x) \rangle =0\textrm{ a.e. }x\in \Omega.$$ This implies that $$m\in \spann \{\curl \omega (x): x\in \Omega \textrm{ a.e. }\}^{\perp}.$$ Therefore, $$\spann \{\curl \omega (x):x\in \Omega \textrm{ a.e. }\}\subseteq \spann \{x\wedge \hat{ \omega}(x): x\in \mathbb R^n\}$$ Thus using (\ref{eq 3'}) and the given hypothesis we can say that $$\spann \{\curl \omega (x):x\in \Omega \textrm{ a.e. }\}= \spann \{x\wedge \hat{ \omega}(x): x\in \mathbb R^n\}.$$ 
Therefore $$\dim \spann\{x\wedge \hat{\eta}(x): x\in \mathbb R^n\}=n.$$
But it can not happen because of lemma  \ref{lemma 1'}. Thus there does not exist any solution $\eta \in W_{0}^{1,\infty}(\Omega;\mathbb R^n)$ of the problem (\ref{ eq 2'}) if $\dim \spann E=n.$
\end{proof}
\section{Restrictions on the Curl Set}
%\subsection{Some algebraic proofs}
In this section, we will see that $\dim \spann E\leq n$  if we add one constraint on $E$ that $\rank [e-f]\leq 2 $ for any $e,f\in E$, where $E\subseteq \Lambda^2(\mathbb R^n)$ and $n\geq 4$.  In lemma \ref{theorem 1}, we will prove it for $n=4$ and in lemma \ref{15/12/2022 1}, we will do it for $n\geq 5.$ Let us notice here one thing that between two statements `$\rank[e-f]\leq 2 $ for any $e,f\in E$' and `$e\wedge f=0 $ for all $e,f\in E$', the later one will always imply the first one but the converse may not be true. We have given one example in remark \ref{remark 2 05/04/2024}$(ii)$ in this respect. Finally, we will establish Theorem \ref{27/3/2024 theorem 1}. Let us first prove the lemma below. 
\begin{lemma} \label{theorem 1}
Let $E\subseteq \Lambda^2(\mathbb R^4)$ be such that  $\rank [e-f]\leq 2$ for all $e,f\in E.$ Then $\dim \spann E\leq 4.$
\end{lemma}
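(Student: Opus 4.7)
The plan is to reduce the bound $\dim\spann E\leq 4$ to the fact that totally isotropic subspaces of $\Lambda^2(\mathbb R^4)$ with respect to the wedge product have dimension at most $3$.

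First I would translate by a fixed element of $E$: assuming $E\neq \emptyset$, pick $e_0\in E$ and set $F:=E-e_0=\{e-e_0:e\in E\}$. Then $\spann E\subseteq \mathbb R e_0+\spann F$, so $\dim \spann E\leq \dim \spann F+1$, and it suffices to prove $\dim \spann F\leq 3$. By hypothesis every $f=e-e_0\in F$ has $\rank[f]\leq 2$, equivalently (for 2-forms) $f$ is simple and therefore $f\wedge f=0$. Moreover, for any $f_1=e_1-e_0$, $f_2=e_2-e_0$ in $F$, the difference $f_1-f_2=e_1-e_2$ again has rank $\leq 2$, so $(f_1-f_2)\wedge(f_1-f_2)=0$. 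Expanding and using $f_i\wedge f_i=0$ yields the crucial cross-identity $f_1\wedge f_2=0$. By bilinearity this extends to the whole linear span: setting $V:=\spann F$, the wedge product vanishes identically on $V\times V$.

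The main step is then purely linear algebra: any subspace $V\subseteq \Lambda^2(\mathbb R^4)$ on which $\omega\wedge\omega'=0$ for all $\omega,\omega'\in V$ satisfies $\dim V\leq 3$. I would prove this via the quadratic form $Q:\omega\mapsto \frac12\,\omega\wedge\omega$ on the six-dimensional space $\Lambda^2(\mathbb R^4)$, identifying $\Lambda^4(\mathbb R^4)\cong \mathbb R$ through the volume form. In the basis $\{e^i\wedge e^j\}_{i<j}$ one computes $Q(\omega)=a_{12}a_{34}-a_{13}a_{24}+a_{14}a_{23}$, a non-degenerate form of signature $(3,3)$ (a fact also visible from the Hodge decomposition into self-dual and anti-self-dual 2-forms, each $3$-dimensional, on which $Q$ is respectively positive and negative definite). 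Hence every totally isotropic subspace of $Q$, and in particular $V$, has dimension at most $3$.

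The main obstacle — if one wishes to avoid invoking signature theory or the Plücker quadric — lies in this last step; it can instead be handled by a direct case analysis, observing that any two non-zero simple forms in $V$ correspond to 2-planes of $\mathbb R^4$ that must meet in at least a line (their wedge vanishing), and then showing that either all such 2-planes share a common vector $v$, in which case $V\subseteq v\wedge \mathbb R^4$ (dimension $3$), or they all lie in a common $3$-dimensional subspace $W$ of $\mathbb R^4$, in which case $V\subseteq \Lambda^2 W$ (again dimension $3$). In either form one concludes $\dim \spann F\leq 3$, and therefore $\dim \spann E\leq 4$.
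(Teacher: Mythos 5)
Your proof is correct and follows essentially the same route as the paper's: both reduce the claim to the fact that a subspace of $\Lambda^2(\mathbb R^4)$ on which the wedge pairing vanishes identically has dimension at most $3$, obtained from the differences of elements of $E$ via the identity $(f_1-f_2)\wedge(f_1-f_2)=0$. The only cosmetic difference is how that bound on isotropic subspaces is justified — you invoke the signature $(3,3)$ of the quadratic form $\omega\mapsto\omega\wedge\omega$, while the paper uses the dimension count $\dim F+\dim\tilde F=\dim\Lambda^2(\mathbb R^4)=6$ together with $F\subseteq\tilde F$; these are interchangeable instances of the same standard fact about non-degenerate symmetric bilinear forms.
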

\begin{proof}
Let $V:=\Lambda^2(\mathbb R^4).$ Let us define a bilinear map $B: V\times V \to \mathbb R$ by $$ B(u,v):=c(u\wedge v) \textrm{ for all } u,v\in V,$$
where $c(u \wedge v)\in \mathbb R$ is such that $$u\wedge v =c(u\wedge v) e^1\wedge e^2\wedge e^3 \wedge e^4.$$
Clearly, $B$ is symmetric and non-degenerate. For any subspace $F\subseteq V,$ let us define $$\tilde{F}:=\{v\in V: B(v,f)=0 \textrm{ for all }f\in F\}.$$
As $B$ is non-degenerate, the map $$B_1: V\to V^*,\textrm{ defined by }v\mapsto B(v,.)$$
is one-one and hence onto. Therefore, the map $$B_2:V\to F^*,\textrm{ defined by }v\mapsto B(v,.)$$ is also onto because  it can be written as $B_2=\psi \circ B_1,$ where $\psi : V^*\to F^*.$ Hence $$\dim V=\dim \ker B_2+\dim F^*.$$
Clearly, $\ker B_2=\tilde{F}$ and $\dim F^*=\dim F$. So 
\begin{equation} \label{eq 1}
\dim F+ \dim \tilde{F}=\dim V.
\end{equation}
Now suppose that $F\subseteq V$ is an isotropic subspace, i.e., $B|_{F\times F}=0.$ In other words, $F\subseteq \tilde{F}.$ In this case, we can say from (\ref{eq 1}) that 
\begin{align*}
\dim V&=\dim F+\dim \tilde{F}\\
&\geq \dim F+ \dim F,
\end{align*}
i.e., $\dim F\leq \frac{\dim V}{2}=3.$
Therefore, if $F\subseteq V$ is any isotropic subspace of $V,$ then $\dim F\leq 3.$

Now suppose that $E\subseteq V$ is such that  for any $e,f\in E,$ there exist $x,y\in \mathbb R^4$ such that $e-f=x\wedge y,$ i.e., $\rank[e-f]\leq 2.$ We will show that $\dim \spann E\leq 4.$

In contrary, let us suppose that $E$ contains five linearly independent elements $\zeta^0,\zeta^1,\zeta^2,\zeta^3$, $\zeta^4$ and let $$\xi^i:=\zeta^i-\zeta^0\textrm{ for }i=1,2,3,4.$$
As every $\xi^i$ has rank less than or equals to $2$, $$\xi^i\wedge \xi^i=0\textrm{ for all }i=1,2,3,4  \ \  \textrm{ [see proposition 2.37(iii) of \cite{csato2011pullback}]}$$
i.e., $$B(\xi^i,\xi^i)=0\textrm{ for all }i=1,2,3,4.$$ Now $\xi^i-\xi^j=\zeta^i-\zeta^j$ and $(\zeta^i-\zeta^j)\wedge (\zeta^i-\zeta^j)=0$ so $(\xi^i-\xi^j)\wedge (\xi^i-\xi^j)=0,$ i.e., $B(\xi^i-\xi^j, \xi^i-\xi^j)=0$ for $i,j\in \{1,2,3,4\}.$ This implies that $$\xi^i\wedge \xi^j=0\textrm{ for all }i,j\in \{1,2,3,4\}.$$
If we take $$F=\{\xi^i: i=1,2,3,4\}\textrm{ and }F':=\spann F$$
then $$B|_{F'\times F'}=0,\textrm{ i.e., } F' \textrm{ is an isotropic subspace of } V$$
and $\dim F'=4.$ It contradicts that $\dim F'\leq 3.$
Therefore, $E$ can not contain $5$ linearly independent elements. Hence $\dim \spann E\leq 4.$
\end{proof}
We will state two trivial lemmas below, the proofs of which are straightforward. We will use these lemmas \ref{ilem} and \ref{ilem2} in the proofs of lemmas \ref{25/2/2022 00} and \ref{15/12/2022 1}. 
\begin{lemma}\label{ilem}
%\color{red}
 Let $n\in \mathbb N$ and $n\geq 3.$ Let $\{\omega,\omega'\}$ be a linearly independent subset of $\Lambda^2(\mathbb R^n)$ such that $\rank[\omega]=2$, $\rank[\omega']=2$ and $\omega \wedge \omega'=0$. Then $\dim[\ker\{\omega\}^{\perp} \cap \ker\{\omega'\}^{\perp}]=1.$  Also $\dim [\ker\{\omega\}\cap \ker \{\omega'\}]=n-3.$
\end{lemma}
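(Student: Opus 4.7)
The plan is to prove the lemma by reducing it to linear algebra on $\mathbb{R}^n$ via the decomposability of rank-2 two-forms. Since $\rank[\omega]=2$ and $\rank[\omega']=2$, I would begin by writing $\omega = a\wedge b$ and $\omega' = a'\wedge b'$ for some $a,b,a',b'\in\mathbb{R}^n$ with $\{a,b\}$ and $\{a',b'\}$ each linearly independent (this is the standard characterization of rank 2). The next step is to identify the kernel: for $x\in\mathbb{R}^n$, the interior product $x\lrcorner(a\wedge b)=\langle x,a\rangle b - \langle x,b\rangle a$ vanishes iff $x\in\spann\{a,b\}^{\perp}$. Hence $\ker\{\omega\}=\spann\{a,b\}^{\perp}$ and $\ker\{\omega\}^{\perp}=\spann\{a,b\}$, and similarly for $\omega'$.

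Next I would exploit the hypothesis $\omega\wedge\omega'=0$. Since $(a\wedge b)\wedge(a'\wedge b') = 0$ is equivalent to the four vectors $a,b,a',b'$ being linearly dependent (this is the classical fact that a simple $k$-form $v_1\wedge\cdots\wedge v_k$ vanishes iff the $v_i$ are dependent), we obtain
\begin{equation*}
\dim\spann\{a,b,a',b'\}\leq 3.
\end{equation*}
On the other hand, the linear independence of $\omega$ and $\omega'$ in $\Lambda^2$ forces $\spann\{a,b\}\neq\spann\{a',b'\}$, for otherwise $\omega'$ would be a scalar multiple of $\omega$ (two decomposable forms on the same 2-plane are proportional). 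Combined with $\dim\spann\{a,b\}=\dim\spann\{a',b'\}=2$, this rules out $\dim\spann\{a,b,a',b'\}=2$, so the dimension must be exactly $3$.

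Applying the Grassmann dimension formula then gives
\begin{equation*}
\dim\bigl(\spann\{a,b\}\cap\spann\{a',b'\}\bigr)=2+2-3=1,
\end{equation*}
which is precisely $\dim\bigl[\ker\{\omega\}^{\perp}\cap\ker\{\omega'\}^{\perp}\bigr]=1$. For the second assertion I would use that orthogonal complement reverses intersections and sums:
\begin{equation*}
\ker\{\omega\}\cap\ker\{\omega'\}=\spann\{a,b\}^{\perp}\cap\spann\{a',b'\}^{\perp}=\bigl(\spann\{a,b,a',b'\}\bigr)^{\perp},
\end{equation*}
which has dimension $n-3$.

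There is no serious obstacle here; the lemma is essentially a bookkeeping exercise. The only point that requires a moment of care is justifying that $\spann\{a,b\}\neq\spann\{a',b'\}$ under the linear independence of $\omega,\omega'$, and that $(a\wedge b)\wedge(a'\wedge b')=0$ really does force linear dependence of the four vectors; both follow from standard facts on simple exterior forms (e.g.\ \cite{csato2011pullback}).
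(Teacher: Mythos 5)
Your proposal is correct and follows essentially the same route as the paper: decompose $\omega=a\wedge b$, $\omega'=a'\wedge b'$, use $\omega\wedge\omega'=0$ to force linear dependence of $\{a,b,a',b'\}$, use linear independence of $\{\omega,\omega'\}$ to rule out $\spann\{a,b\}=\spann\{a',b'\}$, and finish with dimension counting and orthogonal complements. The only cosmetic difference is that you phrase the first claim via the Grassmann formula applied to $\dim\spann\{a,b,a',b'\}=3$, whereas the paper argues directly that the intersection is nonzero and cannot be two-dimensional; these are the same count.
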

\begin{proof}
As $\rank[\omega]=\rank [\omega']=2, $ let us suppose that $$\omega=x\wedge y\textrm{ and }\omega'=x'\wedge y'\textrm{ where } x,y,x',y'\in \mathbb R^n.$$
Again since $\omega\wedge \omega'=0$, the set  $\{x,y,x',y'\}$ is linearly dependent [see Theorem $2.3$,\cite{csato2011pullback}]. This gives us that $$\ker\{\omega\}^{\perp}\cap \ker \{\omega'\}^{\perp}\neq \{0\},$$
because $\ker\{\omega\}^{\perp}\cap \ker \{\omega'\}^{\perp}=\spann \{x,y\}\cap \spann \{x',y'\} =\{0\}$ implies that $\{x,y,x',y'\}$ is linearly independent, which is a contradiction. Hence $$\dim [\ker \{\omega\}^{\perp}\cap \ker \{\omega'\}^{\perp}]\geq 1.$$
Now if $\dim [\ker \{\omega\}^{\perp}\cap \ker \{\omega'\}^{\perp}]=2$ then clearly, $$\ker \{\omega\}^{\perp}\cap \ker \{\omega'\}^{\perp}=\ker \{\omega\}^{\perp}=\ker \{\omega'\}^{\perp},$$
since $\dim \ker\{\omega\}^{\perp}=\dim \ker \{\omega'\}^{\perp}=2.$ This implies that $$\spann \{x,y\}=\spann \{x',y'\},\textrm{ i.e., } \{\omega,\omega'\}\textrm{ is linearly dependent, }$$
which is a contradiction. Therefore $\dim [\ker \{\omega\}^{\perp}\cap \ker \{\omega'\}^{\perp}]=1.$

For the second part, 
\begin{align*}
    \dim [\ker \{\omega\}\cap \ker \{\omega'\}] &=n-\dim [\{\ker \{\omega\}\cap \ker \{\omega'\}\}^{\perp}]\\
    &=n-[\dim \ker\{\omega\}^{\perp}+ \ker\{\omega'\}^{\perp}]\\
    &=n-[\dim \ker\{\omega\}^{\perp}+\dim \ker\{\omega'\}^{\perp}-\dim \{\ker\{\omega\}^{\perp}\cap \ker\{\omega'\}^{\perp}\}]\\
    &=n-[2+2-1], \ \ \textrm{ as } \dim \{\ker\{\omega\}^{\perp}\cap \ker\{\omega'\}^{\perp}\}=1 \\
    &=n-3.
\end{align*}
Therefore $\dim [\ker \{\omega\}\cap \ker \{\omega'\}]=n-3.$
\end{proof}
\begin{lemma}\label{ilem2}
Let $n\in \mathbb N$ and $b\in \mathbb R^n\setminus \{0\}.$ Let $\{\omega_1,\omega_2,\ldots, \omega_m\}\subseteq \mathbb R^n\wedge b$ be a linearly independent subset, where $m\in \mathbb N, m\leq n-1$. If $\omega_i=x_i\wedge b$  for some $x_i\in \mathbb R^n,i=1,\ldots,m$ then $\{b,x_1,\ldots,x_m\}$ is linearly independent.
%\color{teal}
 %Let $\omega_1,\omega_2,\omega_3\in \Lambda^2({\mathbb R^n})$ be such that $\{\omega_1,\omega_2,\omega_3\}$ is linearly independent and $\omega_1=x\wedge b,\omega_2=y\wedge b, \omega_3=z\wedge b$ where $x,y,z,b\in \mathbb R^5\setminus \{0\}.$ Then $\{b,x,y,z\}$ is linearly independent.
\end{lemma}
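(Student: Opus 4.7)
The plan is to prove this by contradiction, using the elementary observation that wedging with $b$ annihilates $b$ itself, so any linear dependence among $\{b,x_1,\ldots,x_m\}$ descends to a dependence among the $\omega_i=x_i\wedge b$.

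More precisely, I would assume for contradiction that $\{b,x_1,\ldots,x_m\}$ is linearly dependent. Then there exist scalars $\alpha_0,\alpha_1,\ldots,\alpha_m\in\mathbb R$, not all zero, with
\[
\alpha_0 b+\sum_{i=1}^m\alpha_i x_i=0.
\]
Wedging this identity with $b$ on the right and using $b\wedge b=0$ yields
\[
\sum_{i=1}^m\alpha_i\,(x_i\wedge b)=\sum_{i=1}^m\alpha_i\,\omega_i=0.
\]

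The only remaining issue is to rule out the trivial case where $\alpha_1=\cdots=\alpha_m=0$. In that situation the original relation collapses to $\alpha_0 b=0$, and since $b\neq 0$ this forces $\alpha_0=0$ as well, contradicting the choice of a nontrivial dependence. Hence at least one $\alpha_i$ with $i\geq 1$ is nonzero, and the displayed equation becomes a nontrivial linear combination of $\omega_1,\ldots,\omega_m$ equal to zero, contradicting their linear independence. Therefore $\{b,x_1,\ldots,x_m\}$ must be linearly independent.

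There is no real obstacle here; the bound $m\leq n-1$ is used only implicitly, in that it ensures $\{b,x_1,\ldots,x_m\}$ has at most $n$ elements so that the conclusion is not vacuously false in $\mathbb R^n$.
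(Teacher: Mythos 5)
Your proof is correct and is essentially the paper's own argument: both wedge the dependence relation with $b$, use $b\wedge b=0$ to reduce it to a relation among the $\omega_i$, and then invoke their linear independence to force all coefficients to vanish. The only difference is cosmetic (you phrase it as a contradiction, the paper argues directly that an arbitrary vanishing combination must be trivial).
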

\begin{proof}
Let $\alpha b+\alpha_1x_1+\cdots + \alpha_m x_m=0$, where $\alpha,\alpha_1,\ldots, \alpha_m \in \mathbb R.$ Then 
\begin{align*}
0=0\wedge b&=(\alpha b+\alpha_1x_1+\cdots +\alpha_mx_m)\wedge b\\
&=\alpha_1x_1\wedge b+\cdots +\alpha_mx_m\wedge b\\
&=\alpha_1\omega_1+\cdots +\alpha_m\omega_m.
\end{align*}
As $\{\omega_1,\ldots , \omega_m\}$ is  linearly independent, $\alpha_i=0$ for $i=1,\ldots, m.$ So $\alpha b=0$ and this gives $\alpha=0.$ Therefore $\{b,x_1,\ldots, x_m\}$ is linearly independent.
\end{proof}

Now let us consider $n\geq 5.$ Using this lemma \ref{25/2/2022 00} below we will prove another main lemma \ref{15/12/2022 1} of this section which states that $\dim \spann E\leq n$ if $\rank [e-f]\leq 2$ for any $e,f\in E.$
\begin{lemma}\label{25/2/2022 00}
 Let $E\subseteq \Lambda^2(\mathbb R^n)\setminus\{0\}, \ n\geq 5$. Let $\dim \spann E=n-1$ and $\omega \wedge \omega'=0$ for any $\omega,\omega'\in E$, then 
 $$\displaystyle\bigcap_{\omega \in E}\ker \{\omega\}^{\perp}\neq \{0\}.$$
 %$\spann E=\mathbb R^n\wedge b$ for some $b\in \mathbb R^n\setminus \{0\}.$
\end{lemma}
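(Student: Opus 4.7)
I would argue by contradiction: assume $\bigcap_{\omega \in E} \ker\{\omega\}^\perp = \{0\}$ and derive $\dim \spann E \leq 3$, which contradicts the hypothesis $\dim \spann E = n - 1 \geq 4$.

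\textbf{Step 1 (extending the hypothesis to $\spann E$).} The identity $\omega \wedge \omega' = 0$ is bilinear, so it extends from $E$ to all of $\spann E$. In particular, every nonzero $\zeta \in \spann E$ satisfies $\zeta \wedge \zeta = 0$ and hence has $\rank[\zeta] = 2$, by Proposition~2.37(iii) of \cite{csato2011pullback}.

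\textbf{Step 2 (a common direction for two fixed forms).} Choose linearly independent $\omega_1, \omega_2 \in E$. Lemma \ref{ilem} provides $b \neq 0$ with $\ker\{\omega_1\}^\perp \cap \ker\{\omega_2\}^\perp = \spann\{b\}$, and I can write $\omega_1 = x_1 \wedge b$, $\omega_2 = x_2 \wedge b$ with $\{b, x_1, x_2\}$ linearly independent (Lemma~\ref{ilem2}).

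\textbf{Step 3 (a third form produces a $3$-plane $U$).} Under the contradiction hypothesis, there is $\omega_3 \in E$ with $b \notin \ker\{\omega_3\}^\perp$; this in particular forces $\omega_3 \notin \spann\{\omega_1, \omega_2\}$. Applying Lemma \ref{ilem} to the pairs $(\omega_3, \omega_1)$ and $(\omega_3, \omega_2)$, the one-dimensional intersections $\ker\{\omega_3\}^\perp \cap \spann\{x_i, b\}$ must avoid $\spann\{b\}$ and are therefore spanned by vectors of the form $x_1 + \alpha b$ and $x_2 + \beta b$ for some $\alpha, \beta \in \mathbb R$. These two vectors are linearly independent and both lie in the $2$-dimensional $\ker\{\omega_3\}^\perp$, so up to a nonzero scalar
\[
\omega_3 = (x_1 + \alpha b) \wedge (x_2 + \beta b) = x_1 \wedge x_2 + \beta\, \omega_1 - \alpha\, \omega_2 .
\]
Hence $x_1 \wedge x_2 \in \spann E$, and setting $U := \spann\{b, x_1, x_2\}$ gives
\[
\Lambda^2(U) = \spann\{\omega_1, \omega_2, x_1 \wedge x_2\} \subseteq \spann E.
\]

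\textbf{Step 4 (forcing all of $E$ into $\Lambda^2(U)$).} For any $\omega \in E$, Step 1 yields $\omega \wedge \alpha = 0$ for every $\alpha \in \Lambda^2(U)$. Writing $\omega = x \wedge y$ and testing against $\alpha = u_1 \wedge u_2$, the condition becomes $\dim \spann\{x, y, u_1, u_2\} \leq 3$, i.e.\ $\spann\{x, y\} \cap \spann\{u_1, u_2\} \neq \{0\}$ for every two-dimensional $\spann\{u_1, u_2\} \subseteq U$. Splitting by the dimension of $\spann\{x, y\} \cap U$: if this intersection is $\{0\}$ any $2$-plane of $U$ violates the requirement; if it is a single line $\spann\{v\}$, then $v$ would have to lie in every $2$-plane of the three-dimensional $U$, which is impossible. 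Hence $\spann\{x, y\} \subseteq U$, i.e.\ $\omega \in \Lambda^2(U)$.

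\textbf{Conclusion.} We obtain $E \subseteq \Lambda^2(U)$, so $\dim \spann E \leq \binom{3}{2} = 3$. Since $n \geq 5$, this contradicts $\dim \spann E = n-1 \geq 4$. Therefore the contradiction hypothesis fails and $b \in \bigcap_{\omega \in E} \ker\{\omega\}^\perp$, proving the intersection is nonzero.

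The main obstacle is Step 4: the little geometric argument that a decomposable $2$-form annihilating $\Lambda^2(U)$ under the wedge product (with $\dim U = 3$) must itself live in $\Lambda^2(U)$. Steps 2 and 3 rely only on Lemmas \ref{ilem} and \ref{ilem2}.
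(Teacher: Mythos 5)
Your proof is correct, and it takes a genuinely different route from the paper's. The paper fixes a basis $\{\omega_1,\ldots,\omega_{n-1}\}$ of $\spann E$, sets $A_i=\ker\{\omega_i\}$, and splits into two cases according to whether some triple intersection $A_i^{\perp}\cap A_j^{\perp}\cap A_k^{\perp}$ is one-dimensional (in which case it propagates this to the full intersection $\bigcap_i A_i^{\perp}$) or all triple intersections are trivial (which it excludes by a dimension count on $A_i+A_j+A_k+A_l$). You instead exhibit the underlying structural dichotomy directly: either the line $\spann\{b\}$ common to the supports of $\omega_1$ and $\omega_2$ lies in every $\ker\{\omega\}^{\perp}$, or some $\omega_3$ misses it, in which case $x_1\wedge x_2\in \spann E$, hence $\Lambda^2(U)\subseteq \spann E$ for the $3$-plane $U=\spann\{b,x_1,x_2\}$, and your Step 4 then traps all of $E$ inside $\Lambda^2(U)$, forcing $\dim\spann E\leq 3<n-1$. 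All the individual steps check out: Step 1 is polarization; Steps 2--3 use only Lemmas \ref{ilem} and \ref{ilem2} together with the fact that a rank-$2$ form is determined up to a scalar by its $2$-dimensional support; and the Step 4 geometry (a $2$-plane that meets every $2$-plane of a $3$-space must be contained in it) is sound. What your route buys is transparency and the stronger conclusion that is actually used downstream, namely $b\in\bigcap_{\omega\in E}\ker\{\omega\}^{\perp}$ and hence $\spann E=\mathbb R^n\wedge b$; it also sidesteps the paper's inclusion--exclusion computation for the dimension of a sum of four subspaces, which is the most delicate point of the original argument. The paper's approach, in exchange, stays entirely at the level of the kernels $A_i$ and records explicitly that the full intersection is exactly one-dimensional.
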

\begin{proof}
Let $\{\omega_1,\omega_2,\ldots, \omega_{n-1}\}$ be a basis of $\spann E$ and $$A_i:=\ker\{\omega_i\},\ i=1,2,\ldots, n-1.$$
Now two cases may arise:
%\begin{itemize}
    %\item Case 1: There exists $1\leq i<j<k\leq n-1$ such that $\dim (A_i^{\perp}\cap A_j^{\perp}\cap A_k^{\perp})=1.$
    %\item Case 2: $A_i^{\perp}\cap A_j^{\perp}\cap A_k^{\perp}=\{0\}$ for any $1\leq i<j<k\leq n-1.$
%\end{itemize}

\textit{Case 1}: In this case, let there exists $1\leq i <j<k\leq n-1$ such that $$\dim (A_i^{\perp}\cap A_j^{\perp}\cap A_k^{\perp})=1.$$
Without loss of generality, let $$\dim (A_1^{\perp}\cap A_2^{\perp}\cap A_3^{\perp})=1.$$
If $\displaystyle\bigcap\limits_{i=1}^{4}A_i^{\perp}=\{0\},\ $ then together with $\dim (A_1^{\perp}\cap A_2^{\perp}\cap A_3^{\perp})=1$ and $\omega_4\wedge \omega_4=0,$ we can say that $$(A_1^{\perp}\cap A_2^{\perp}\cap A_3^{\perp})=\spann \{b\}$$
and $$\omega_4=\alpha\wedge \beta\textrm{ with }\dim \spann\{\alpha,\beta,b\}=3\textrm{ for some }\alpha,\beta,b\in \mathbb R^n\setminus \{0\}.$$
Now let $$\omega_1=x\wedge b,\omega_2=y\wedge b,\omega_3=z\wedge b\textrm{ for some }x,y,z\in \mathbb R^n\setminus \{0\},$$ the existence of $x,y,z$ follows from Cartan's lemma, see Theorem $2.42$ of \cite{csato2011pullback}. As $\{\omega_1,\omega_2,\omega_3\}$ is linearly independent, it follows from lemma \ref{ilem2} that $$\{x,y,z,b\}\textrm{  is linearly independent} .$$
If possible, let all of $\{\alpha,\beta,b,x\},\{\alpha,\beta,b,y\}$ and $\{\alpha,\beta,b,z\}$  are linearly dependent. Then we see that $$x,y,z\in \spann \{\alpha,\beta,b\}$$
and therefore $$\spann \{x,y,z\}=\spann \{\alpha,\beta,b\}.$$ This gives $$b\in \spann \{x,y,z\},$$
which is a contradiction because $\{x,y,z,b\}$ is linearly independent. Now without loss of generality let $\{\alpha,\beta,x,b\}$ is linearly independent then clearly $$\omega_4\wedge \omega_1\neq 0$$
which contradicts our hypothesis that $$\omega_i\wedge \omega_j=0\textrm{ for all  } 1\leq i<j \leq 4.$$
Therefore $\displaystyle\bigcap\limits_{i=1}^{4}A_i^{\perp}\neq \{0\}$ and hence we can say that 
\begin{equation}\label{25/2/2022 1}
    \bigcap \limits_{i=1}^{4}\ker \{\omega_i\}^{\perp}=\bigcap \limits_{i=1}^{3}\ker \{\omega_i\}^{\perp},
\end{equation}
as $\dim \bigcap \limits_{i=1}^{3}\ker \{\omega_i\}^{\perp}=1$.
By similar argument, we can again show that 
\begin{equation}\label{25/2/2022 2}
    \bigcap \limits_{i=1,i\neq 4}^{5}\ker \{\omega_i\}^{\perp}=\bigcap \limits_{i=1}^{3}\ker \{\omega_i\}^{\perp}.
\end{equation}
From equations  $(\ref{25/2/2022 1})$ and $(\ref{25/2/2022 2})$ we can say that 
\begin{align}
    \bigcap \limits_{i=1}^{5}\ker\{\omega_i\}^{\perp}&=\left[\bigcap\limits_{i=1}^{4}\ker\{\omega_i\}^{\perp} \right]\bigcap \left[\bigcap \limits_{i=1,i \neq 2}^{5}\ker\{\omega_i\}^{\perp} \right]\nonumber\\
    &=\bigcap\limits_{i=1}^{3}\ker\{\omega_i\}^{\perp}.
\end{align}
Thus we can assert that $$\bigcap\limits_{i=1}^{3}A_i^{\perp}=A_1^{\perp}\cap A_2^{\perp}\cap A_3^{\perp}\cap A_k^{\perp} \ \textrm{ for all  } \ k\in\{4,5,\ldots, n-1\}.$$
Therefore 
\begin{align}
    \bigcap\limits_{i=1}^{n-1}A_i^{\perp} &=\bigcap\limits_{i=4}^{n-1}\left(A_1^{\perp}\cap A_2^{\perp}\cap A_3^{\perp}\cap A_i^{\perp}  \right)\\
    & = A_1^{\perp}\cap A_2^{\perp}\cap A_3^{\perp}.
\end{align}
This implies that $$\dim \left( \bigcap \limits_{i=1}^{n-1}A_i^{\perp}\right)=1.$$
Therefore $$\bigcap\limits_{i=1}^{n-1}A_i^{\perp}=\spann\{b\}\textrm{  for some } \ b\in \mathbb R^n\setminus \{0\}$$ and hence $$\spann E=\spann \{\omega_1,\omega_2,\ldots, \omega_{n-1}\}=\mathbb R^n \wedge b.$$

\textit{Case 2}: In this case  we let $$A_i^{\perp}\cap A_j^{\perp}\cap A_k^{\perp}=\{0\}\textrm{ for any } \ 1\leq i<j<k\leq n-1.$$
As $n\geq 5$, there exists $l\in \{1,2,\ldots, n-1\}\setminus \{i,j,k\}$ such that  $$\dim  A_l^{\perp}=2\textrm{  and } \ A_i^{\perp}\cap A_j^{\perp}\cap A_k^{\perp}\subseteq A_l^{\perp}.$$
This gives us  $\dim (A_l^{\perp}+(A_i^{\perp}\cap A_j^{\perp}\cap A_k^{\perp}))=2$ and so 
\begin{equation}\label{25/2/2022 3}
    \dim(A_l\cap (A_i+A_j+A_k))=n-2.
\end{equation}
Because of $A_i^{\perp}\cap A_j^{\perp}\cap A_k^{\perp}=\{0\}$, $$A_i^{\perp}\cap A_j^{\perp}\cap A_k^{\perp}\cap A_l^{\perp}=\{0\}$$ and it follows that $$\dim (A_i+A_j+A_k+A_l)=n.$$
Now 
\begin{align*}
    \dim(A_i+A_j+A_k+A_l) &=\dim A_i +\dim A_j +\dim A_k +\dim A_l\\
    &\ \ - \dim(A_j\cap A_k)-\dim(A_j\cap A_l)-\dim (A_k\cap A_l)\\
    &\ \ +\dim(A_j\cap A_k\cap A_l)-\dim (A_i\cap (A_j+A_k+A_l))\\
    &= (n-2)\times 4-(n-3) \times 3 + \dim (A_j\cap A_k\cap A_l)-(n-2).\\
    & \ \ \hspace{1 cm}  [\textrm{using lemma \ref{ilem} and equation \ref{25/2/2022 3}}]\\
    &=\dim (A_j\cap A_k\cap A_l).
\end{align*}
That is, $\dim(A_j\cap A_k\cap A_l)=n$. But $n\geq 5$, so $\dim (A_j\cap A_k\cap A_l)\geq 5$. Also  $$A_j\cap A_k\cap A_l\subseteq A_j\cap A_k$$
and $\dim(A_j\cap A_k)=n-3$ by lemma \ref{ilem}. So $\dim(A_i\cap A_j\cap A_k)$ can not be equal to $n$ for $n\geq 5.$ Thus we are getting contradiction and it follows that case 2 can not happen.
\end{proof}

\begin{remark}
\ \ 
\begin{itemize}
\item[(i)] We can not apply the proof of the above lemma \ref{25/2/2022 00} for $\Lambda^2(\mathbb R^4)$, i.e., if $E\subset \Lambda^2(\mathbb R^4)$ such that $\dim \spann E=3$ and $\omega\wedge \omega'=0$ for any $\omega,\omega' \in E$ then it may not happen that $\spann E =\mathbb R^4\wedge b$ for some $b\in \mathbb R^4\setminus \{0\}$ because if we take the set $E$ as $\{e^1\wedge e^2, e^1\wedge e^3,e^2\wedge e^3\}$ then $E$ can not be written as a subset of $\mathbb R^4\wedge b$ for any $b\in \mathbb R^4\setminus \{0\}.$ Importantly, case $2$ of the above lemma is true for this example. 

\item [(ii)] \label{remark 2 05/04/2024} The aforementioned lemma is not true if we replace the case $\omega \wedge \omega'=0$ for all $\omega, \omega'\in E$ with $\rank [\omega -\omega']\leq 2$ for any $\omega,\omega' \in E.$ For example, if we take $E$ as $$E=\{e^2\wedge e^3, e^2\wedge e^3 + e^1\wedge e^2,e^2\wedge e^3+ e^1\wedge e^3,\ldots, e^2\wedge e^3 + e^1\wedge e^{n-1}\}$$ for $n\in \mathbb N$ and $n\geq 5,$ then $\dim \spann E=n-1$ but $\spann E$ can not be written as $\mathbb R^n \wedge b$ for any $b\in \mathbb R^n.$ For $n=4,$ we can simply take the set $\{e^2\wedge e^3,e^2\wedge e^3+e^1\wedge e^2, e^2\wedge e^3+ e^1\wedge e^4 \}$.
\end{itemize}
\end{remark}
We will use the following lemma in Theorem \ref{27/3/2024 theorem 1}.
\begin{lemma} \label{15/12/2022 1}
Let $n\in \mathbb N, n\geq 5$ and $E\subset \Lambda^2(\mathbb R^n)$ be such that $\rank [e-f]\leq 2$ for any $e,f\in E.$ Then $\dim \spann E\leq n.$

%Let $n\in \mathbb N$ and $E\subseteq \Lambda^2(\mathbb R^n)$ be such that for any $e,f\in E$ there exist $p,q\in \mathbb R^n$ such that $e-f=p\wedge q$. Then there does not exist any $u\in W^{1,\infty}(\Omega;\mathbb R^n)$ such that 
%\begin{align*}
%\curl u\in E \ \ \textrm{ a.e. in } \Omega\\
%\textrm{ and } \meas\{x\in \Omega: \curl u (x)=e\}>0 \ \ \textrm{ for all } e\in E,
%\end{align*}
%if we take $\dim \spann E\geq n+1.$

\end{lemma}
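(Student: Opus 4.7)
My strategy is to reduce the problem to a linear one by translation and then invoke Lemma~\ref{25/2/2022 00}.

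Fix any $\zeta^0 \in E$ and set $F := \{e - \zeta^0 : e \in E\}$ together with $F' := \spann F$. The hypothesis gives $\rank[\omega] \leq 2$ for every $\omega \in F$, so $\omega \wedge \omega = 0$ (Proposition~2.37(iii) of \cite{csato2011pullback}); analogously $(\omega - \omega') \wedge (\omega - \omega') = 0$ for all $\omega, \omega' \in F$, and expanding this and cancelling the square terms yields $\omega \wedge \omega' = 0$. By bilinearity this extends to $\alpha \wedge \beta = 0$ for all $\alpha, \beta \in F'$, so in particular every element of $F'$ is decomposable.

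The crux is to show $\dim F' \leq n-1$. Suppose for contradiction that $\dim F' \geq n$, and let $F'' \subseteq F'$ be any $(n-1)$-dimensional subspace with basis $\{\omega_1, \ldots, \omega_{n-1}\} \subseteq \Lambda^2(\mathbb{R}^n) \setminus \{0\}$. Applying Lemma~\ref{25/2/2022 00} to this basis gives a nonzero $b \in \bigcap_i \ker\{\omega_i\}^\perp$. Writing each $\omega_i = x_i \wedge y_i$ and noting $\ker\{\omega_i\}^\perp = \spann\{x_i, y_i\}$, the condition $b \in \spann\{x_i, y_i\}$ lets me solve for one of $x_i, y_i$ in terms of $b$ and conclude $\omega_i \in \mathbb{R}^n \wedge b$. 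Hence $F'' \subseteq \mathbb{R}^n \wedge b$; since $\dim(\mathbb{R}^n \wedge b) = n-1$ by Lemma~2.1 of \cite{bandyopadhyay2015some}, these subspaces coincide: $F'' = \mathbb{R}^n \wedge b$.

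Now take $\omega \in F' \setminus F''$. Since $\omega \wedge (u \wedge b) = 0$ for all $u \in \mathbb{R}^n$, we get $(\omega \wedge b) \wedge u = 0$ for all $u$, which (as $\omega \wedge b \in \Lambda^3(\mathbb{R}^n)$ and $n \geq 5$) forces $\omega \wedge b = 0$. Cartan's lemma (Theorem~2.42 of \cite{csato2011pullback}) then produces $v \in \mathbb{R}^n$ with $\omega = v \wedge b \in \mathbb{R}^n \wedge b = F''$, contradicting the choice of $\omega$. Therefore $\dim F' \leq n-1$; combined with $\spann E \subseteq \mathbb{R}\zeta^0 + F'$ this yields $\dim \spann E \leq n$. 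The main obstacle is the step in which the output of Lemma~\ref{25/2/2022 00} (a nonzero vector $b$ in an intersection of orthogonal complements) must be upgraded to the structural identity $F'' = \mathbb{R}^n \wedge b$; once this is in hand, the reduction by translation and the final Cartan-lemma exclusion are routine.
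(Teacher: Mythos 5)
Your proof is correct and takes essentially the same route as the paper: translate $E$ by a fixed element so that all pairwise wedge products of the differences vanish, then invoke Lemma \ref{25/2/2022 00} to identify an $(n-1)$-dimensional piece of the span with $\mathbb R^n\wedge b$. The only divergence is the endgame: the paper applies Lemma \ref{25/2/2022 00} a second time to the shifted family $\{\psi_2,\dots,\psi_n\}$ and uses Lemma \ref{ilem} to force the two hyperplanes $\mathbb R^n\wedge b$ and $\mathbb R^n\wedge b'$ to coincide, whereas you apply it once and then absorb any further element of the span via the computation $(\omega\wedge b)\wedge u=0$ together with Cartan's lemma. Both endgames work; yours has the minor advantage of using only the stated conclusion of Lemma \ref{25/2/2022 00} (a nonzero $b$ in $\bigcap_i\ker\{\omega_i\}^{\perp}$), which you then upgrade to $F''=\mathbb R^n\wedge b$ yourself, rather than the stronger identity $\spann\{\psi_1,\dots,\psi_{n-1}\}=\mathbb R^n\wedge b$ that the paper cites directly from that lemma but which is only established inside its proof.
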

\begin{remark}
    For $n=4,$ we have done separate proof of this lemma in \ref{theorem 1} because we will use lemma \ref{25/2/2022 00} in the proof below which may not hold for $n=4.$
\end{remark}
\begin{proof}
We will show that $\dim \spann E\leq n$ if we take $\rank[e-f]\leq 2$ for any $e,f\in E.$ Let us suppose that $E$ contains $n+1$ linearly independent elements $\omega_0,\omega_1,\ldots, \omega_n.$ Let $$\psi_i:=\omega_i-\omega_0,\  i=1,\ldots, n.$$
The set $\{\psi_i:i=1,\ldots, n\}$ is linearly independent set with $$\psi_i\wedge \psi_j=0\textrm{ for any } i,j.$$
Indeed, $\psi_i-\psi_j=\omega_i-\omega_j$ and $(\omega_i-\omega_j)\wedge (\omega_i-\omega_j)=0$, i.e., $(\psi_i-\psi_j)\wedge (\psi_i-\psi_j)=0$. 
Therefore from lemma \ref{25/2/2022 00} we can say that there exists $b\in \mathbb R^n \setminus\{0\}$
such that $$\spann \{\psi_1,\ldots,\psi_{n-1}\}=\mathbb R^n\wedge b.$$
Similarly there exists $b'\in \mathbb R^n \setminus \{0\}$ 
such that $$\spann \{\psi_2,\ldots, \psi_n\}=\mathbb R^n \wedge b'.$$
As $n\geq 5$, we can say from lemma \ref{ilem} that $$\ker\{\psi_2\}^{\perp}\cap \ker \{\psi_3\}^{\perp}=\displaystyle\bigcap_{i=1}^{n-1}\ker \{\psi_i\}^{\perp}$$
and $$\ker \{\psi_2\}^{\perp}\cap \ker \{\psi_3\}^{\perp}=\displaystyle\bigcap_{i=2}^{n}\ker\{\psi_i\}^{\perp}$$
as $\dim [\ker \{\psi_2\}^{\perp}\cap \ker \{\psi_3\}^{\perp}]=1.$  Therefore $$\mathbb R^n\wedge b=\spann \{\psi_1,\ldots,\psi_{n-1}\}=\mathbb R^n\wedge b'=\spann \{\psi_2,\ldots, \psi_n\},$$
which is a contradiction because $\{\psi_1,\ldots, \psi_n\}$ is a linearly independent set. 

Hence $\dim \spann E\leq n.$
\end{proof}
Let us prove the main theorem on differential inclusions of this section using the previous lemma   \ref{15/12/2022 1} and Theorem \ref{theorem1'}.
\begin{theorem} \label{27/3/2024 theorem 1}
    Let $n\in \mathbb N, n\geq 5$ and $E\subseteq \Lambda^2(\mathbb R^n)$ be such that $$\omega \wedge \omega'=0 \textrm{ for all }\ \omega ,\omega'\in E.$$
    Let $\Omega \subseteq \mathbb R^n$ be  an open, bounded set.  Then there exists $\eta \in W_0^{1,\infty}(\Omega;\mathbb R^n)$ such that
    \begin{align} \label{16/12/2022 1}
     & \curl \eta \in E \textrm{ a.e. in } \Omega  \nonumber \\
      & \meas\{x\in \Omega : \curl \eta (x)=e\}>0 \textrm{ for all } e\in E\\
      & \textrm{ and } \int\limits_{\Omega}\eta \neq 0, \nonumber
     \end{align}
     if and only if $0\in \rico E$ and $\dim \spann E=n-1.$
\end{theorem}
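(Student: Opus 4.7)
The plan is to prove the biconditional in two parts, with the main subtlety appearing in the sufficiency direction.

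\textbf{Necessity.} Suppose $\eta \in W_0^{1,\infty}(\Omega;\mathbb R^n)$ satisfies all three conditions in (\ref{16/12/2022 1}). The hypothesis $\omega\wedge\omega'=0$ on $E$ gives $(e-f)\wedge(e-f)=e\wedge e-2e\wedge f+f\wedge f=0$, i.e.\ $\rank[e-f]\leq 2$ for all $e,f\in E$, so Lemma~\ref{15/12/2022 1} yields $\dim\spann E\leq n$, and Theorem~\ref{theorem1'}---applied with condition (ii) and $\int_{\Omega}\eta\neq 0$---excludes $\dim\spann E=n$. For the matching lower bound I would replay the Fourier-transform argument from the proof of Theorem~\ref{theorem1'}: extending $\eta$ by zero one obtains $x\wedge\hat\eta(x)\in\spann E$ for every $x\in\mathbb R^n$, and Proposition~2.2 of \cite{bandyopadhyay2015some} yields
\[
\mathbb R^n\wedge\hat\eta(0)\subseteq\spann\{x\wedge\hat\eta(x):x\in\mathbb R^n\}\subseteq\spann E,
\]
where the new information $\hat\eta(0)=\int_\Omega\eta\neq 0$ combined with Lemma~2.1 of \cite{bandyopadhyay2015some} forces $\dim(\mathbb R^n\wedge\hat\eta(0))=n-1$, hence $\dim\spann E=n-1$. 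For $0\in\rico E$, component-wise integration by parts in $W_0^{1,\infty}$ gives $\int_\Omega\curl\eta=0$; condition (ii) then writes $0$ as a strictly positive convex combination of the elements of $E$, placing $0$ in $\ri\co E$.

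\textbf{Sufficiency.} Assume $\dim\spann E=n-1$ and $0\in\rico E$. By Lemma~\ref{25/2/2022 00} (specifically Case~1 of its proof) there exists $b\in\mathbb R^n\setminus\{0\}$ with $\spann E=\mathbb R^n\wedge b$. Choosing coordinates so that $b=e^n$, the inclusion $\curl\eta\in E$ forces $(\curl\eta)_{ij}=0$ for $1\leq i<j\leq n-1$, i.e.\ $(\eta_1,\ldots,\eta_{n-1})$ is curl-free in the $(x_1,\ldots,x_{n-1})$-variables. Introducing a scalar potential $u$ with $\eta_i=\partial_i u$ for $i<n$ and setting $v:=\eta_n-\partial_n u$, the inclusion $\curl\eta\in E$ translates to a gradient-type inclusion $\nabla' v\in \tilde E\subset\mathbb R^{n-1}$, where $\nabla'=(\partial_1,\ldots,\partial_{n-1})$ and $\tilde E$ is the image of $E$ under the identification $\mathbb R^n\wedge e^n\cong\mathbb R^{n-1}$. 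Under the corresponding condition $0\in\rico\tilde E$, the existence machinery of Bandyopadhyay-Dacorogna-Kneuss~\cite{bandyopadhyay2015some} for the $\dim\spann E=n-1$ case supplies $\eta_0\in W_0^{1,\infty}(\Omega;\mathbb R^n)$ with $\curl\eta_0\in E$ a.e.\ and every $e\in E$ attained on a set of positive measure.

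\textbf{Main obstacle.} The remaining and most delicate step is securing $\int_\Omega\eta\neq 0$. Every curl-free element of $W_0^{1,\infty}(\Omega;\mathbb R^n)$ on a bounded domain is the gradient of a $W_0^{2,\infty}$ potential and therefore has zero integral, so no naive curl-free perturbation of $\eta_0$ can adjust the integral. My plan is to engineer the nonzero integral inside the iterative construction itself: initiate the Vitali-type exhaustion from a seed field $\eta^{\mathrm{seed}}\in W_0^{1,\infty}(\Omega;\mathbb R^n)$ whose curl takes values in $\rico E$ (permissible since $0\in\rico E$ leaves room on either side) and whose integral over $\Omega$ is manifestly nonzero, and arrange each ball-supported correction to contribute zero to $\int_\Omega\eta$ by using symmetric laminate pairs within $\spann E=\mathbb R^n\wedge b$ (the two wells of each laminate cancel in the mean). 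The limit $\eta$ then inherits the nonzero integral of $\eta^{\mathrm{seed}}$ while $\curl\eta\in E$ a.e.\ and the measure condition on every level set of $\curl\eta$ are preserved by the construction. I expect the hardest bookkeeping to be keeping the integral bounded away from zero through the limit while simultaneously exhausting every element of $E$ on a positive-measure set, and maintaining the $L^\infty$ bound needed to land in $W_0^{1,\infty}$.
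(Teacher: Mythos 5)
Your necessity argument follows the paper's route: the hypothesis $\omega\wedge\omega'=0$ gives $\rank[e-f]\leq 2$, Lemma~\ref{15/12/2022 1} caps the dimension at $n$, Theorem~\ref{theorem1'} excludes $\dim\spann E=n$, and the lower bound $n-1$ comes from $\mathbb R^n\wedge\hat\eta(0)\subseteq\spann E$ with $\hat\eta(0)=\int_\Omega\eta\neq 0$ (the paper cites Theorem~2.5 of \cite{bandyopadhyay2015some} for this; replaying the Fourier argument is an acceptable substitute). The claim $0\in\rico E$ is stated somewhat informally --- for this to work one should note that the positive-measure condition forces $E$ to be at most countable, so that $0=\frac{1}{\meas\Omega}\int_\Omega\curl\eta$ is a genuine strictly positive convex combination; the paper delegates this to Lemma~2.4 of \cite{bandyopadhyay2015some}. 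These are cosmetic differences.

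The genuine gap is in the sufficiency direction. After correctly invoking Lemma~\ref{25/2/2022 00} to get $\spann E=\mathbb R^n\wedge b$, you do not actually produce the solution: you reduce to a gradient-type inclusion, correctly identify that no curl-free (hence zero-integral) perturbation can fix $\int_\Omega\eta$, and then describe a \emph{plan} (``engineer the nonzero integral inside the iterative construction itself,'' ``I expect the hardest bookkeeping to be\dots''). A plan is not a proof, and the step you defer is precisely the hard part of the theorem: simultaneously achieving $\curl\eta\in E$ a.e., positive measure for every level set, membership in $W_0^{1,\infty}$, \emph{and} $\int_\Omega\eta\neq 0$. The paper closes this in one line by citing Corollary~3.9 of \cite{bandyopadhyay2015some}, which is stated exactly for the situation $\spann E=\mathbb R^n\wedge b$ with $0\in\rico E$ and already delivers a solution with nonzero integral. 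Either you invoke that result (in which case your reconstruction of the gradient reduction is redundant) or you must carry out the laminate/Vitali construction in full detail, including the verification that the corrections preserve the integral and the $L^\infty$ gradient bounds in the limit --- none of which is done. As written, the ``if'' half of the equivalence is unproven.
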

 \begin{proof}
     If there exists $\eta \in W_0^{1,\infty}(\Omega;\mathbb R^n)$ such that 
     \begin{align} \label{15/12/2022 2}
     & \curl \eta \in E \textrm{ a.e. in } \Omega  \nonumber \\
      & \meas\{x\in \Omega : \curl \eta (x)=e\}>0 \textrm{ for all } e\in E\\
      & \textrm{ and } \int\limits_{\Omega}\eta \neq 0, \nonumber
     \end{align}
     then from Theorem $2.5$ of \cite{bandyopadhyay2015some} we can say that $$\dim \spann E\geq n-1.$$
     As $E$ has the property that $\omega \wedge \omega' =0$ for any $\omega,\omega' \in E,$ it implies that $$\rank[\omega-\omega']\leq 2 \textrm{ for any } \omega, \omega'\in E.$$
     Hence  from lemma \ref{15/12/2022 1} it follows that $$\dim \spann E\leq n.$$
     Now for $\dim \spann E=n,$ there does not exist any solution $\eta \in W_0^{1,\infty}(\Omega;\mathbb R^n)$ of the problem (\ref{15/12/2022 2}) which directly follows from Theorem \ref{theorem1'}. Hence $\dim \spann E=n-1.$ Using lemma $2.4$ of \cite{bandyopadhyay2015some}, it holds that $0\in \rico E.$ 

     Conversely, if $\dim \spann E=n-1,$ then using lemma \ref{25/2/2022 00} there exists $b\in \mathbb R^n \setminus \{0\}$ such that $\spann E=\mathbb R^n \wedge b$. Now a solution $\eta $ exists satisfying (\ref{16/12/2022 1}) from corollary $3.9$ of \cite{bandyopadhyay2015some}.
     
 \end{proof}

\begin{remark}
    For $n=4,$ the necessary part of the above theorem is true and it follows from Theorem \ref{theorem1'} and lemma \ref{theorem 1}. But the converse part is not true because $\spann E$ may not be written as $\mathbb R^4\wedge b$ for some $b\in \mathbb R^4\setminus \{0\}$ always.
\end{remark}
\section{Existence of Solution: Dimension 2n-3}
\label{27/3/2024 section} In this section, we will see one existence result of solution $\eta \in W_0^{1,\infty}(\Omega;\mathbb R^n)$ at dimension $(2n-3)$ for the following differential inclusion problem 
\begin{align*}
&\curl \eta\in E \textrm{ a.e. in }\Omega,\\
&\meas\{x\in \Omega:\curl \eta (x)=e\}>0 \textrm{ for all } e\in E.
\end{align*}
Let $n\in \mathbb N, n\geq 4.$ Let $E\subseteq \Lambda^2(\mathbb R^n)\setminus \{0\}$ and $\dim \spann E=2n-3$ where 
$E= \{e^1\wedge e^2, e^1\wedge e^3,\ldots, e^1\wedge e^n, -e^1\wedge e^2,-e^1\wedge e^3,\ldots, -e^1\wedge e^n,  e^2\wedge e^3,e^2\wedge e^4,\ldots, e^2\wedge e^n, -e^2\wedge e^3,-e^2\wedge e^4,\ldots, -e^2\wedge e^n\}.$
Clearly, $\spann E=\mathbb R^n\wedge e^1+\mathbb R^n\wedge e^2.$ Let us write $E=E_1\cup E_2$, where $$E_1=\{e^1\wedge e^2,e^1\wedge e^3,\ldots, e^1\wedge e^n, -e^1\wedge e^2,-e^1\wedge e^3,\ldots, -e^1\wedge e^n\}$$ and $$E_2=\{e^2\wedge e^1,e^2\wedge e^3,\ldots , e^2\wedge e^n, -e^2\wedge e^1,-e^2\wedge e^3,\ldots , -e^2\wedge e^n\}.$$ 
Then, $\spann E_1=\mathbb R^n\wedge e^1$, $\spann E_2=\mathbb R^n\wedge e^2$ and $0\in \rico E_1\cap \rico E_2$. 

\textbf{Step-1:} Let $G=I_1\times I_2\times \cdots \times I_n$ be  open unit cube in $\mathbb R^n$, where $I_i=(0,1)$ for each $i=1,\ldots, n.$ Let us divide the domain $G$ into two parts as follows: $$G_1=\left(0,\frac{1}{2}\right)\times I_2\times \cdots \times I_n$$ and $$G_2=\left(\frac{1}{2},1\right)\times I_2\times \cdots \times I_n.$$ 
Clearly, $G_1,G_2$ are open, bounded sets in $\mathbb R^n.$ Then there exist $\overline{\eta_1}\in W_0^{1,\infty}(G_1;\mathbb R^n)$ and $\overline{\eta_2}\in W_0^{1,\infty}(G_2;\mathbb R^n)$ such that $$\curl \overline{\eta_1}\in E_1 \textrm{ a.e. in } G_1$$ and $$\curl \overline{\eta_2} \in E_2 \textrm{ a.e. in }G_2.$$
Let us define a mapping $\overline{\eta} \in W_0^{1,\infty}(G;\mathbb R^n)$ by 
\[
\overline{\eta}(x)=
\begin{cases}
 &\overline{\eta_1}(x), \ \ \textrm{if } x\in G_1 \textrm{ a.e. }\\
 &\overline{\eta_2}(x) , \ \ \textrm{if }x\in G_2 \textrm{ a.e. }
\end{cases}
\]
Then $$\curl \overline{\eta} \in E \textrm{ a.e. in }G,$$
i.e., there exists $\overline{\eta}\in W_0^{1,\infty}(G;\mathbb R^n)$ such that 
\begin{align*}
&\curl\overline{ \eta} \in E \textrm{ a.e. in } G,\\
&\meas\{x\in \Omega:\curl\overline{\eta} (x)=e\}>0 \textrm{ for all } e\in E,
\end{align*}
where $\dim \spann E=2n-3$ and $\spann E=\mathbb R^n \wedge e^1+\mathbb R^n \wedge e^2.$

\textbf{Step 2:} Let $\Omega \subseteq \mathbb R^n$ be an open, bounded set. Using Vitali's covering theorem, there exists a sequence $\{G_k: k\in \mathbb N\}$, where $G_k$'s are translated and dilated sets of $\overline{G}$ (closure of $G$) and $G$ is as defined in step-$1$ above such that 
\begin{align*}
    &G_k\subseteq \Omega \textrm{ for each } k\in \mathbb N,\\
   & G_h\cap G_k =\emptyset \textrm{ for all } h,k\in \mathbb N, h\neq k,\\
   & \meas \left( \Omega \setminus \bigcup_{k\in \mathbb N}G_k\right)=0.
\end{align*}
Let $G_k:= a_k+t_k \overline{G}$, where $a_k\in \mathbb R^n$ and $t_k\in \mathbb R\setminus \{0\}$. Let us define a map $\eta \in W_0^{1,\infty}(\Omega;\mathbb R^n)$ by 
\[
\eta(z)=
\begin{cases}
    \overline{\eta}\left(\frac{z-a_k}{t_k}\right), \ \ \textrm{ if } z\in G_k\\
    0, \ \ \textrm{ if } z\in \Omega \setminus \displaystyle\bigcup_{k\in \mathbb N}G_k .
\end{cases}
\]
Clearly, $\eta \in W_0^{1,\infty}(\Omega;\mathbb R^n)$ and $$\curl \eta \in E \textrm{ a.e. in }\Omega.$$
Thus we are getting a solution $\eta \in W_0^{1,\infty}(\Omega;\mathbb R^n)$ for the following differential inclusion problem 
\begin{align*}
&\curl \eta \in E \textrm{ a.e. in } \Omega,\\
& \meas \{z\in \Omega : \curl \eta(z)=e\}>0 \textrm{ for all } e\in E.
\end{align*}
\section{The Case of a k-form}
In this section, we will generalize lemma \ref{lemma 1'} to exterior $k$-form.
\begin{theorem}\label{06/04/2024 lemma 1}
Let $n\in \mathbb N,$ $1\leq k \leq n-3.$ Suppose $f:\mathbb R^n\to \Lambda^k(\mathbb R^n)$ be  continuous such that $f(0)\neq 0$ and $f(0)$ is $k$-divisible, i.e., $f(0)=c^1\wedge \ldots \wedge c^k$ for some $c^i\in \mathbb R^n\setminus \{0\}$ for  $i=1,\ldots, k.$ Then 
\begin{equation}\nonumber
\dim \spann \{x\wedge f(x): x\in \mathbb R^n\}\neq n-k+1.
\end{equation}

\end{theorem}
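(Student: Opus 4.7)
The plan is to adapt the proof of Lemma~\ref{lemma 1'} to the $k$-form setting, with an intersection argument in a quotient space replacing the final use of Proposition~$2.16$ of \cite{csato2011pullback} (which was specific to $k=1$). Set $\mc S:=\spann\{x\wedge f(x):x\in\mathbb R^n\}$ and $W:=\mathbb R^n\wedge f(0)$. The scaling $v\wedge f(tv)=\tfrac{1}{t}(tv\wedge f(tv))\in\mc S$ for $t\neq 0$, together with continuity of $f$ and closedness of the finite dimensional subspace $\mc S$, yields $W\subseteq\mc S$; since $f(0)=c^1\wedge\cdots\wedge c^k$ is simple nonzero, $\ker(v\mapsto v\wedge f(0))=\spann\{c^1,\ldots,c^k\}$, so $\dim W=n-k$. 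Assume for contradiction $\dim\mc S=n-k+1$ and pick $\omega\in\mc S\cap W^{\perp}$ with $\omega\neq 0$; orthogonality $\omega\perp W$ is equivalent to $f(0)\lrcorner\,\omega=0$ via the duality $\langle f(0)\lrcorner\,\omega;v\rangle=\langle\omega;f(0)\wedge v\rangle$.

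For each $x$ write $x\wedge f(x)=c(x)\wedge f(0)+\beta(x)\,\omega$, with both $c$ and the scalar $\beta$ continuous. Since $\dim\mc S>\dim W$, $\beta$ is not identically zero, so $\beta\neq 0$ on some nonempty open set $U\subseteq\mathbb R^n$. For $x\in U$, wedging the decomposition with $x$ kills the left-hand side (since $x\wedge x=0$), leaving $x\wedge\bigl[\beta(x)\omega+c(x)\wedge f(0)\bigr]=0$ up to a sign. Cartan's lemma (Theorem $2.42$ of \cite{csato2011pullback}) applied to the nonzero $1$-form $x$ then produces $\gamma(x)\in\Lambda^k(\mathbb R^n)$ with $\beta(x)\omega+c(x)\wedge f(0)=x\wedge\gamma(x)$, and dividing by $\beta(x)\neq 0$ gives
\begin{equation*}
\omega\in W+x\wedge\Lambda^k(\mathbb R^n)\qquad\text{for every }x\in U.
\end{equation*}

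Choosing any basis $\{a^1,\ldots,a^n\}\subseteq U$ yields $\omega\in\bigcap_{i=1}^n(W+a^i\wedge\Lambda^k)$; the heart of the proof is showing this intersection equals $W$, for then $\omega\in W\cap W^{\perp}=\{0\}$, contradicting $\omega\neq 0$. I would first verify the equality on the ``$f(0)$-aligned'' tuple $a^i=c^i$ for $i\leq k$ (completed arbitrarily to a basis): for each $i\leq k$ one has $a^i\wedge f(0)=0$ (the factor $c^i$ appears twice), so $a^i\wedge w=0$ for every $w\in W$, whence $W\subseteq a^i\wedge\Lambda^k$ by Cartan's lemma, and therefore $W+a^i\wedge\Lambda^k=a^i\wedge\Lambda^k$. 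Consequently
\begin{equation*}
\bigcap_{i=1}^n(W+a^i\wedge\Lambda^k)\ \subseteq\ \bigcap_{i=1}^ka^i\wedge\Lambda^k=(a^1\wedge\cdots\wedge a^k)\wedge\mathbb R^n=f(0)\wedge\mathbb R^n=W,
\end{equation*}
and the reverse inclusion is trivial. The property ``$\bigcap_{i=1}^n(W+a^i\wedge\Lambda^k)=W$'' is Zariski-open on tuples $(a^1,\ldots,a^n)\in(\mathbb R^n)^n$ (its failure is the vanishing of an explicit determinant), and being satisfied by the aligned tuple, it carves out a dense Zariski-open subset of $(\mathbb R^n)^n$; in particular it meets the nonempty Euclidean-open subset $U^n$, furnishing a basis inside $U$ with the required intersection equality.

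The main obstacle I anticipate is making the intersection step fully rigorous. A back-of-the-envelope dimension count supports it: for generic $a^i$ one has $\dim(W\cap a^i\wedge\Lambda^k)=1$, whence each $(W+a^i\wedge\Lambda^k)/W$ has codimension $\binom{n-1}{k+1}-(n-k)+1$ in the quotient $\Lambda^{k+1}(\mathbb R^n)/W$ of dimension $\binom{n}{k+1}-(n-k)$, and the inequality ``$n\cdot\text{codim}\geq\dim$'' needed for generic triviality of the intersection simplifies to $(n-2)(n-k-1)\geq 1$, valid precisely in the hypothesised range $k\leq n-3$, $n\geq 4$. A secondary subtlety worth highlighting is that the $k$-divisibility of $f(0)$ is essential in two places: first, to obtain $\dim W=n-k$; second, to produce the cancellation $c^i\wedge f(0)=0$ that makes the aligned-basis computation go through. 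Without simplicity both ingredients fail and the plan collapses.
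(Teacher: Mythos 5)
Your setup coincides with the paper's up to and including the choice of a basis $\{a^1,\ldots,a^n\}$ inside an open set where the coefficient $\beta$ of $\omega$ is nonzero, but at the decisive step you diverge, and the route you choose has a genuine gap. The paper wedges the decomposition $a^j\wedge f(a^j)=b^j\wedge f(0)+\beta^j\omega$ with $c^r\wedge a^j$ for each \emph{individual factor} $c^r$ of the simple form $f(0)=c^1\wedge\cdots\wedge c^k$: the left-hand side dies because $a^j$ repeats, the $W$-term dies because $f(0)\wedge c^r=0$, and what survives is $\beta^j\,\omega\wedge c^r\wedge a^j=0$; running over the basis gives $\omega\wedge c^r=0$ for every $r$, hence $\omega\in(c^1\wedge\cdots\wedge c^k)\wedge\Lambda^1=W$, and $\omega\in W\cap W^{\perp}=\{0\}$. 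You instead wedge only with $a^j$, which retains strictly less information --- only $\omega\in W+a^j\wedge\Lambda^k$ --- and you are then forced to prove the nontrivial identity $\bigcap_{j=1}^{n}\bigl(W+a^j\wedge\Lambda^k\bigr)=W$, which you never actually establish.

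Your proposed proof of that identity --- verify it on the aligned tuple $a^i=c^i$ for $i\leq k$ and transfer to a basis inside $U$ by Zariski-openness --- does not work, because the aligned tuple is precisely a degenerate point of the family. For generic $a$ one has $\dim\bigl(W+a\wedge\Lambda^k\bigr)=\binom{n-1}{k}+(n-k)-1$, whereas $W+c^i\wedge\Lambda^k=c^i\wedge\Lambda^k$ has the strictly smaller dimension $\binom{n-1}{k}$ (the deficit is $n-k-1\geq 2$ in your range). So as you perturb $c^i$ to a nearby generic $a^i$, the subspaces being intersected strictly \emph{grow}, the intersection can only get larger, and no semicontinuity transfers the equality from your witness tuple to its neighbours. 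The usual mechanism for Zariski-openness --- presenting each subspace as the kernel of a polynomially varying matrix so that the intersection is the kernel of the stacked matrix, whose dimension is upper semicontinuous --- is unavailable here, since kernels of polynomial families can only grow at special points while $W+a\wedge\Lambda^k$ shrinks at $a=c^i$; the ``explicit determinant'' cutting out the good locus is never exhibited. (Your heuristic count is also off: the inequality you state, $(n-2)(n-k-1)\geq 1$, still holds at $k=n-2$, where the theorem is false by Remark \ref{27/3/2024 remark 1}, so it cannot be the correct criterion.) The repair is not to patch the genericity argument but to exploit $k$-divisibility the way the paper does: wedge with each factor $c^r$ \emph{before} wedging with $a^j$, which replaces the entire intersection problem by the elementary conclusion $\omega\wedge c^r=0$ for all $r$.
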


\begin{proof} 
Let $$\mathcal S:=\spann\{x\wedge f(x):x\in \mathbb R^n \}.$$
Let us suppose to the contrary that $\dim \mathcal S=n-k+1.$  We know that 
\begin{equation}\label{260920221}
\mathbb R^n \wedge f(0)\subseteq \mathcal S,
\end{equation}
from proposition  $2.2$ of \cite{bandyopadhyay2015some}. Therefore $$\mathcal S=\left(\mathbb R^n \wedge f(0)\right)\oplus \spann (\omega ),$$ 
for some $\omega \in \Lambda^{k+1}(\mathbb R^n)\setminus \{0\}$.  
Using proposition $2.16$ of \cite{csato2011pullback}, we can write
\begin{align*}
\langle f(0)\lrcorner \omega; x\rangle &=(-1)^{k+1}\langle \omega; f(0)\wedge x\rangle\\
&=0, \textrm{ because }\omega \in \left[\mathbb R^n\wedge f(0)\right]^{\perp}\textrm{ for all }x\in \mathbb R^n.
\end{align*}
Therefore 
\begin{equation}\label{260920225}
f(0)\lrcorner \  \omega =0.
\end{equation}
Let us choose $x^0\in \mathbb R^n$ such that $x^0\wedge f(x^0)\in \Lambda^{k+1}\setminus (\mathbb R^n\wedge f(0))$. Such an $x^0$ exists because $\mathbb R^n \wedge f(0)\subsetneqq \mathcal S$.
Since $\Lambda^{k+1}\setminus (\mathbb R^n \wedge f(0))$ is open, it follows from the continuity of $f$ that, for some $\epsilon >0$ $$x\wedge f(x)\in \Lambda^{k+1}\setminus (\mathbb R^n \wedge f(0)), $$
for all $x\in B(x^0,\epsilon).$\\
\\
Let us choose a basis $\{a^1,\ldots,a^n\}$ of $\mathbb R^n$ within $B(x^0,\epsilon)$. Then for each $j\in \{1,2,\ldots,n\}$, we find $\beta^j\in \mathbb R$ and $b^j\in \mathbb R^n$ such that $$a^j\wedge f(a^j)=b^j\wedge f(0)+\beta^j\omega.$$
Note that, $\beta^j\neq 0 \textrm{ for all }j\in \{1,2,\ldots,n\}.$\\
\\
We have $f(0)=c^1\wedge \cdots \wedge c^k$. Let $1\leq r\leq k$ be fixed. Then for all $j\in \{1,\ldots, n\}$, $$\beta^j (\omega \wedge c^r\wedge a^j)=0$$ 
implies that $$(\omega \wedge c^r)\wedge a^j=0 \textrm{ for all }j=1,\ldots, n.$$
This gives us $$\omega \wedge c^r=0 \textrm{ for all }r\in \{1,\ldots ,k\}.$$
Therefore, we have $$\omega =(c^1\wedge \ldots \wedge c^k)\wedge \omega'$$
for some $\omega'\in \mathbb R^n,$ where $c^i\lrcorner\omega'=0$ for all $i=1,\ldots, k.$ \\
\\
Since $(c^1\wedge \ldots \wedge c^k)\lrcorner \omega =0$ [from equation (\ref{260920225})], we have $\omega'=0$. \\
\\
Hence $\omega=0$, a contradiction. This proves the theorem.
\end{proof}

\begin{remark} \label{27/3/2024 remark 1}
For $k=n-2$ and $n\geq 4$, there exists a continuous function such that the above Theorem \ref{06/04/2024 lemma 1} fails. For example, let us take $f:\mathbb R^n \to \Lambda^{n-2}(\mathbb R^n)$ defined by $$f(x_1,\ldots, x_n)=(x_1+1)e^1\wedge \cdots \wedge e^{n-2}+x_2 \ e^1\wedge \cdots \wedge e^{n-3}\wedge e^{n-1}, $$
for all $x=(x_1,\ldots, x_n)\in \mathbb R^n$. Then 
\begin{align*}
    x\wedge f(x)&=(x_1 e^1+\cdots +x_n e^n)\wedge f(x)\\
    & =\left((-1)^{n-3}x_{n-2}x_2 +(-1)^{n-2}x_{n-1}(x_1+1) \right) e^1\wedge\cdots \wedge e^{n-1}\\
    & \ + (-1)^{n-2}x_n (x_1+1)e^1\wedge \cdots \wedge e^{n-2}\wedge e^n \\
    & \ + (-1)^{n-2} x_n x_2 e^1\wedge \cdots \wedge e^{n-3}\wedge e^{n-1}\wedge e^n.
\end{align*}
Clearly, $\dim \spann \{x\wedge f(x): x\in \mathbb R^n\}$ has dimension $n-(n-2)+1=3.$

For $k=n-1,$ the above Theorem \ref{06/04/2024 lemma 1} is trivially true.
\end{remark}

\section*{Funding}
The research of N. Nesha was supported by CSIR Senior Research Fellowship [File No:09/921(0309)/2020-EMR-I].
%\section*{Funding}

%\section*{References}
\bibliographystyle{plain}
\bibliography{8_mybib}

\begin{thebibliography}{10}

\bibitem{ball1987fine}
JM~Ball and RD~James.
\newblock Fine phase mixtures as minimizers of energy.
\newblock {\em Archive for Rational Mechanics and Analysis}, 100(1):13--52, 1987.

\bibitem{bandyopadhyay2007differential}
Saugata Bandyopadhyay, Ana~Cristina Barroso, Bernard Dacorogna, and Jos{\'e} Matias.
\newblock Differential inclusions for differential forms.
\newblock {\em Calculus of Variations and Partial Differential Equations}, 28(4):449--469, 2007.

\bibitem{bandyopadhyay2015some}
Saugata Bandyopadhyay, Bernard Dacorogna, and Olivier Kneuss.
\newblock Some new results on differential inclusions for differential forms.
\newblock {\em Transactions of the American Mathematical Society}, 367(5):3119--3138, 2015.

\bibitem{BressanFlores}
Alberto Bressan and Fabi\'{a}n Flores.
\newblock On total differential inclusions.
\newblock {\em Rend. Sem. Mat. Univ. Padova}, 92:9--16, 1994.

\bibitem{cellina1993minimanecessary}
Arrigo Cellina.
\newblock On minima of a functional of the gradient: necessary conditions.
\newblock {\em Nonlinear Analysis: Theory, Methods \& Applications}, 20(4):337--341, 1993.

\bibitem{cellina1993minima}
Arrigo Cellina.
\newblock On minima of a functional of the gradient: sufficient conditions.
\newblock {\em Nonlinear Analysis: Theory, Methods \& Applications}, 20(4):343--347, 1993.

\bibitem{csato2011pullback}
Gyula Csat{\'o}, Bernard Dacorogna, and Olivier Kneuss.
\newblock {\em The pullback equation for differential forms}, volume~83.
\newblock Springer Science \& Business Media, 2011.

\bibitem{PisanteDacorogna}
B.~Dacorogna and G.~Pisante.
\newblock A general existence theorem for differential inclusions in the vector valued case.
\newblock {\em Port. Math. (N.S.)}, 62(4):421--436, 2005.

\bibitem{DirectBernard}
Bernard Dacorogna.
\newblock {\em Direct methods in the calculus of variations}, volume~78 of {\em Applied Mathematical Sciences}.
\newblock Springer-Verlag, Berlin, 1989.

\bibitem{dacorogna2005nonconvex}
Bernard Dacorogna.
\newblock Nonconvex problems of the calculus of variations and differential inclusions.
\newblock In {\em Handbook of Differential Equations: Stationary Partial Differential Equations}, volume~2, pages 57--126. Elsevier, 2005.

\bibitem{DacorognaFonseca}
Bernard Dacorogna and Irene Fonseca.
\newblock A-{B} quasiconvexity and implicit partial differential equations.
\newblock {\em Calc. Var. Partial Differential Equations}, 14(2):115--149, 2002.

\bibitem{DacorognaMarcellini}
Bernard Dacorogna and Paolo Marcellini.
\newblock {\em Implicit partial differential equations}, volume~37 of {\em Progress in Nonlinear Differential Equations and their Applications}.
\newblock Birkh\"{a}user Boston, Inc., Boston, MA, 1999.

\bibitem{BlasiPianigiani}
Francesco~Saverio De~Blasi and Giulio Pianigiani.
\newblock On the {D}irichlet problem for first order partial differential equations. {A} {B}aire category approach.
\newblock {\em NoDEA Nonlinear Differential Equations Appl.}, 6(1):13--34, 1999.

\bibitem{Friesecke}
Gero Friesecke.
\newblock A necessary and sufficient condition for nonattainment and formation of microstructure almost everywhere in scalar variational problems.
\newblock {\em Proc. Roy. Soc. Edinburgh Sect. A}, 124(3):437--471, 1994.

\bibitem{Rockafellar}
R.~Tyrrell Rockafellar.
\newblock {\em Convex analysis}, volume No. 28 of {\em Princeton Mathematical Series}.
\newblock Princeton University Press, Princeton, NJ, 1970.

\bibitem{SwarnenduACV}
Swarnendu Sil.
\newblock Calculus of variations: a differential form approach.
\newblock {\em Adv. Calc. Var.}, 12(1):57--84, 2019.

\bibitem{Sychev}
M.~A. Sychev.
\newblock Comparing two methods of resolving homogeneous differential inclusions.
\newblock {\em Calc. Var. Partial Differential Equations}, 13(2):213--229, 2001.

\end{thebibliography}

\end{document}